\documentclass[letterpaper, 10 pt, conference]{ieeeconf}

\IEEEoverridecommandlockouts
\usepackage{cite}
\usepackage{amsmath,amssymb,amsfonts}
\usepackage{algorithmic}
\usepackage{graphicx}
\usepackage{textcomp}
\usepackage{xcolor}
\usepackage[makeroom]{cancel}
\usepackage[T1]{fontenc}
\usepackage{mathtools}

\let\labelindent\relax

\usepackage{enumerate}
\usepackage{enumitem}

\usepackage{bbm}
\usepackage{amsthm}
\usepackage{amsmath}
\usepackage{thmtools}
\usepackage{tikz}
\usetikzlibrary{shapes.geometric, arrows.meta, positioning}
\usepackage{amsfonts}
\usepackage[thinc]{esdiff}
\usepackage{url}
\usepackage{hyperref}
\usepackage{cleveref}

\declaretheorem[]{lemma}
\declaretheorem[name = Theorem]{thm}

\declaretheorem[name = Assumption]{assump}
\declaretheorem[name = Remark, style = remark]{rmk}

\tikzstyle{block} = [rectangle, draw, fill=blue!20, text width=2.5em, text centered, rounded corners, minimum height=2em]
\tikzstyle{line} = [draw]
\tikzstyle{input} = [coordinate]
\tikzstyle{output} = [coordinate]

\newcommand{\R}{\mathbb{R}}

\newcommand{\Vv}{\mathcal{V}}

\newcommand{\gzero}{\gamma_0}

\newcommand{\gone}{\gamma_1}
\newcommand{\snext}{s^{+}}
\newcommand{\xnext}{x^{+}}
\newcommand{\dt}{u}
\newcommand{\ett}{\eta(u)}
\newcommand{\etzero}{\eta_0 }
\newcommand{\etone}{\eta_1 }
\newcommand{\ettwo}{\eta_2 }

\newcommand{\gmax}{\|g\|_{\infty{}}}
\newcommand{\smax}{\| s\|_{\infty}}
\newcommand{\dmax}{\| y \|}
\newcommand{\G}{\| g\|_{\infty}}

\newcommand\xmax{\| y \|_{\infty}}
\newcommand{\lin}{l}
\newcommand{\nlin}{p}
\newcommand{\dst}{u}
\newcommand{\Uu}{\mathcal{U}_{\geq0}}

\newcommand{\rrd}{\R^d \times \R_{\geq 0}^d}

\makeatletter
\newcommand{\distas}[1]{\mathbin{\overset{#1}{\kern\z@\sim}}}%
\newsavebox{\mybox}\newsavebox{\mysim}

\def\BibTeX{{\rm B\kern-.05em{\sc i\kern-.025em b}\kern-.08em
    T\kern-.1667em\lower.7ex\hbox{E}\kern-.125emX}}
\begin{document}

\title{Global Stability and Step Size Robustness of RMSProp
}

\author{Naum Dimitrieski, Maria Christine Honecker, Carsten Scherer, and Christian Ebenbauer
\thanks{Naum Dimitrieski, Maria Christine Honecker, and Christian Ebenbauer are with the Chair of Intelligent Control Systems, RWTH Aachen University, 52062 Aachen, Germany
        {\tt\small (e-mail: \{naum.dimitrieski, maria.honecker, christian.ebenbauer\}@ic.rwth-aachen.de)}, and  
        Carsten Scherer is with the Chair of Mathematical Systems Theory, University of Stuttgart, 70569 Stuttgart, Germany {\tt\small (e-mail: \{carsten.scherer\}@mathematik.uni-stuttgart.de)}
        }%
}

\maketitle

\begin{abstract}
In this paper, an input-to-state Lyapunov function for the RMSProp optimization algorithm is introduced. Global asymptotic stability of the RMSProp algorithm for constant step sizes and robustness properties with respect to arbitrary bounded time-varying step size rules are established. 
\end{abstract}
%

\section{Introduction}

Adaptive gradient methods are gradient-based optimization methods where the step size is adapted in a feedback fashion
based on current and past iterates and gradients. They have proven to be effective in many fields of optimization and machine learning \cite{defossez2020simple}. Among the most well-known adaptive gradient methods are AdaGrad \cite{duchi2011adaptive}, RMSProp \cite{tieleman_hinton2012lecture} and Adam \cite{kingma2014adam}. 
As surveyed in \cite{abdulkadirov2023survey, he2021survey_Adam}, Adam and Adam-like methods are central to training deep neural networks. Notably, Adam integrates adaptive step sizes from RMSProp with momentum in the Polyak sense \cite{polyak1964some}. Thus, RMSProp represents a core component of Adam and its variants.

While momentum methods are quite well understood and in recent years rather extensively analyzed from a systems theoretic perspective (see, e.g.,\cite{schererebenbauer2025_tutorialIQC}), adaptive step size rules remain less well understood.
Hence, the purpose of this work is to establish basic systems-theoretic properties of the deterministic RMSProp algorithm
in the sense of global stability and robustness.

In recent years, considerable research effort has been devoted to establishing convergence and stability properties of RMSProp. Due to the close relation between Adam and RMSProp (see, e.g., \cite{chen2022towards, zou2019sufficient}) we also review the literature related to Adam, in particular, works whose results apply to RMSProp as well.
We start with local stability properties of RMSProp, which have been analyzed in recent works. For instance, in \cite{dereich2025sharp_conv_rate_RMSProp} local exponential stability is established, while in \cite{bensaid2024abstract} local asymptotic stability under a backtracking line-search step size rule is shown.

The literature on global convergence properties of RMS\-Prop can be broadly divided into deterministic results (deterministic gradients), and  stochastic results (stochastic gradients). For instance, in the deterministic setting, \cite{barakatRMSProp2020convergence} enforces a modified step size rule guaranteeing  sufficiently small adaptive step sizes, while \cite{bensaid_Lyapunov2024convergence} achieves a similar result under a backtracking line-search step size rule. In both works, convergence to a stationary point is ensured by enforcing a sufficiently small step size. 

Global stability results are established, for instance, in \cite{ bensaid2023deterministic,heredia2024modeling , dereich2025asymptotic_stability}. In particular, \cite{bensaid2023deterministic, heredia2024modeling} provide stability results on the continuous-time algorithm.
Further, \cite{dereich2025asymptotic_stability} shows boundedness of the iterates for a special class of objective functions which  does not encompass all strongly convex and L-smooth objective functions. Moreover, no result on the convergence of the iterates to any stationary point of the algorithm is provided.
Hence, global asymptotic stability of RMSProp and boundedness of iterates for arbitrary step size rules is not well explored in the literature.

In this paper, we establish novel systems-theoretic properties of RMSProp.
In more detail, the contributions are as follows. First, we propose a novel Lyapunov function and show global asymptotic 
stability for strongly convex and $L$-smooth objective functions.
Second, we show that this function is also an input-to-state Lyapunov function with respect to bounded time-varying step sizes. 
This property implies boundedness of the iterates for an arbitrary bounded step size rule. To the best of our knowledge,
the proposed Lyapunov function is the first global and input-to-state Lyapunov function for RMSProp. 
Our results allow, for example, 
to design bounded time-varying step size rules with guaranteed bounded iterates and allow to
control the trade-off between convergence rate and convergence accuracy for RMSProp.

\textit{Notation:} We denote the sets of real and non-negative numbers as $\R$ and $\R_{\geq 0}$, respectively, and the set of non-negative integers as~${\mathbb{N}_0}$. We denote by $\mathcal{C}^n$ the set of $n$ times continuously differentiable functions, and for any univariate function $h \in \mathcal{C}^{1}$, we denote the first derivative as~$h'$. Further, we say a function~$h$ belongs to class~${\mathcal{K}_{\infty}}$ if it satisfies \cite[Def. 2]{kellett2014compendiumISS}. For~$x \in \R^d$, we denote  the Euclidean norm by~${\| x \|}$.

\section{Problem Statement}
We consider an  optimization problem of the form
\begin{align}
    \label{problem:1}
    \underset{x \in \R^d}{\mathrm{min}}~ f(x),
\end{align}
where $f\colon\R^d \to \R$ is the objective function. The main goal of this paper is to analyze the stability and robustness of the RMSProp algorithm, as introduced in \cite{tieleman_hinton2012lecture}, where for each~${i \in \overline{1:d}}$ one defines  
\begin{subequations}
    \label{sys:rmsprop_original}
    \begin{align}
        \label{sys:s_original}
            s_{i}(t+1) &= (1-\beta)s_{i}(t) + \beta (\nabla_i f(x(t)))^2, \\
        \label{sys:x_original}
            x_{i}(t+1) &= x_{i}(t) - \tfrac{\eta(\dst(t))}{\varepsilon + \sqrt{s_{i}(t+1)}}\nabla_i f(x(t)),
    \end{align}
\end{subequations}
with the initial conditions $x(0) \in \R^d$, $s(0) \in \R^d_{\geq 0}$ as well as the parameters~${\varepsilon > 0}$,~${\beta \in (0,1)}$, and the step sizes~${\eta(\dst(t))\coloneqq \etzero  + \dst(t)}$, for ${\etzero  > 0}$ and
\begin{align*}
 \dst(\cdot) \in \Uu  \coloneqq  \big\{u\colon\mathbb{N}_0\to\mathbb{R}_{\geq0} \ \colon \ \text{$u(\cdot)$ is bounded}\big\}.
\end{align*}
In our analysis, we will choose $\etzero$ such that the algorithm is globally asymptotically stable for $u(\cdot)$ identically zero.
The term $u(\cdot)$ is introduced as an additional degree of freedom to tune the algorithm in terms of convergence rate and convergence accuracy (error floor).
To simplify the notation in \eqref{sys:rmsprop_original}, we omit the dependence on~$t$ and introduce the functions~${\snext\colon\rrd \to \R_{\geq 0}^d}$ and~${\xnext\colon\rrd\times \R_{\geq 0} \to \R^d}$ as
\begin{subequations}
    \label{sys:rmsprop}
    \begin{align}
        \label{sys:s}
            \snext_{i} &\coloneqq (1-\beta)s_{i} + \beta (\nabla_i f(x))^2, \\
        \label{sys:x}
            \xnext_{i} &\coloneqq x_{i} - \tfrac{{\ett}}{\varepsilon + \sqrt{\snext_{i}}}\nabla_i f(x).
    \end{align}
\end{subequations}
We make the following assumptions for our analysis.
\begin{assump}
    \label{assump}
    Consider problem  \eqref{problem:1}. Assume that $f$ is
    \begin{enumerate}[label= \roman*)]
        \item \label{assump:convex} $\mathcal{C}^1$, $\mu$-strongly convex for some $\mu > 0$, and has a unique global minimizer~${x^{*} \in \R^d}$, and
        \item \label{assump:L_smooth}
        globally $L$~smooth for some ${L > 0}$, i.e., for any~${x,y\in \R^d}$ we have~${\lVert \nabla f(y) - \nabla f(x) \rVert \!\leq\! L \lVert y - x \rVert}$.
    \end{enumerate}
\end{assump}
\Cref{assump} is restrictive but still standard in the optimization literature
for the purpose of algorithm analysis \cite{nesterov2013introductory}.

\section{Main Results}
In this section we present the main results of this paper, namely a Lyapunov function for algorithm \eqref{sys:rmsprop} and input-to-state stability (ISS) of this algorithm with respect to the step size rule ${\dt}(\cdot)$. 

We start with the Lyapunov function candidate for algorithm \eqref{sys:rmsprop}. We propose the Lyapunov function candidate~${\Vv\colon \R^d \times \R^d_{\geq 0}} \to \R_{\geq 0}$ 
\begin{align}
    \label{function:Lyapunov}
    \Vv(x,s) &\coloneqq \gamma(f(x) - f(x^{*}))  + 2\sum\limits_{i=1}^d h(s_i),
\end{align}
where for any $\omega \in \R_{\geq 0}$ the functions~$\gamma$ and~$h$ are defined as
\begin{subequations}
\label{lyapunov:subfunctions}
\begin{align}
    \label{lyapunov:gamma} \gamma(\omega)&\coloneqq \gzero\omega + \tfrac{2}{3}\gone\omega^{\frac{3}{2}}, \\
    \label{lyapunov:theta} h(\omega) &\coloneqq \sqrt{\omega} + \varepsilon\log(\varepsilon) - \varepsilon\log(\sqrt{\omega} + \varepsilon)
\end{align}
\end{subequations}
with the constants
\begin{align}
    \label{gzero_gbar}
    \gzero \! &\coloneqq \! \mathrm{max}\big\{3\beta\etzero^{-1} ,\gamma_{01}, \gamma_{02}\big\}, &   \gone \! &\coloneqq \!\tfrac{12 \beta^{\frac{3}{2}} \sqrt{L}}{\etzero  \varepsilon},
\end{align} with ${\gamma_{01}\coloneqq\tfrac{2\beta\varepsilon}{\etzero \left(2\varepsilon-(\etzero + \etone)  L\right)}}$,~${\gamma_{02} \coloneqq \tfrac{\gone \etzero  \sqrt{L d}}{\sqrt{\beta}} \! + \!\tfrac{12 \beta L\sqrt{d}}{\varepsilon}}$, and where~${\beta,\varepsilon}$ and~$\etzero$ are the parameters in algorithm~\eqref{sys:rmsprop},~$L$ is the smoothness constant of~$f$ in~\Cref{assump}~\ref{assump:L_smooth} and~${\etone > 0}$ is any positive constant.

Next, we provide the main theorem of the paper, formally characterizing the asymptotic stability and ISS for any input ${\dt(\cdot) \in \Uu}$ of algorithm \eqref{sys:rmsprop} with respect to the equilibrium~${(x^{*},0)}$. 

\begin{thm}
    \label{ISS-theorem}
    Consider algorithm \eqref{sys:rmsprop} and let Assumption \ref{assump} be satisfied with~${\mu > 0}$, ${x^{*}\in \R^d}$ and~${L > 0}$. Let ${\beta \in (0,1)}$, ${\varepsilon > 0}$, and~${\ett\coloneqq \etzero + \dt}$, where~${\etzero  \in \left(0, \tfrac{2\varepsilon}{L}\right)}$ and~${\dst(\cdot) \in \Uu}$. Moreover, let~${\etone > 0}$ satisfy~${\etzero + \etone < \frac{2\varepsilon}{L}}$. Then,
    algorithm \eqref{sys:rmsprop} is ISS for any step size ${\dst(\cdot) \in \Uu}$ with respect to the equilibrium~$(x^{*},0)$, and the function \eqref{function:Lyapunov} is an ISS-Lyapunov function for algorithm~\eqref{sys:rmsprop}.
\end{thm}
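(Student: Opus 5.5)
We show that $\Vv$ in \eqref{function:Lyapunov} satisfies the two standard conditions of a discrete-time ISS-Lyapunov function (in the sense of \cite{kellett2014compendiumISS}): sandwich bounds by class-$\mathcal{K}_\infty$ functions of $\|(x-x^*,s)\|$, and a dissipation inequality
\begin{align*}
\Vv(\xnext,\snext)-\Vv(x,s)\le -\alpha_3(\|(x-x^*,s)\|)+\sigma(u),
\end{align*}
with $\alpha_3\in\mathcal{K}_\infty$ (or positive definite) and $\sigma\in\mathcal{K}_\infty$. ISS of \eqref{sys:rmsprop} then follows from the standard ISS-Lyapunov theorem.

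\textbf{Sandwich bounds.} Strong convexity and $L$-smoothness give $\tfrac{\mu}{2}\|x-x^*\|^2\le f(x)-f(x^*)\le \tfrac L2\|x-x^*\|^2$, and $\gamma$ is increasing and of class $\mathcal{K}_\infty$. For $h$ we have $h(0)=0$ and $h'(\omega)=\tfrac{1}{2(\sqrt\omega+\varepsilon)}>0$. Moreover $h(\omega)\ge \sqrt\omega-\varepsilon\log(1+\sqrt\omega/\varepsilon)$ grows like $\sqrt\omega$, so $h\in\mathcal{K}_\infty$. Summing over $i$ and using norm equivalence yields the bounds.

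\textbf{Decrease estimate.} Write $g=\nabla f(x)$, $D=\mathrm{diag}\big(\eta/(\varepsilon+\sqrt{\snext_i})\big)$, $\Delta=f(\xnext)-f(x)$ and $\phi=f(x)-f(x^*)$.

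1. By $L$-smoothness,
\begin{align*}
\Delta\le -\sum_i \tfrac{\eta}{\varepsilon+\sqrt{\snext_i}}\Big(1-\tfrac{L\eta}{2(\varepsilon+\sqrt{\snext_i})}\Big)g_i^2.
\end{align*}
When $\eta\le\etzero+\etone<2\varepsilon/L$ this is at most $-c\sum_i \tfrac{g_i^2}{\varepsilon+\sqrt{\snext_i}}$ with $c=\etzero(2\varepsilon-(\etzero+\etone)L)/(2\varepsilon)$. This is where $\gamma_{01}$ enters. When $u$ is larger, the positive part is bounded by something like $\tfrac{L\eta^2}{2\varepsilon^2}\|g\|^2$ and absorbed into $\sigma(u)$ in the form $u^2\|g\|^2$-type terms; this is the delicate point discussed below.

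2. The $\gamma$ term is handled by convexity of $\gamma$: $\gamma(f(\xnext)-f^*)-\gamma(\phi)\le \gamma'(\phi^+)\Delta$, where $\gamma'(\omega)=\gzero+\gone\sqrt\omega$ is evaluated at the larger argument. Because $\Delta\le0$ in the nominal case, one may use $\gamma'(\phi)$. This gives the decrease $-(\gzero+\gone\sqrt\phi)\,c\sum_i g_i^2/(\varepsilon+\sqrt{\snext_i})$.

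3. For the $h$ term, concavity of $h$ gives
\begin{align*}
h(\snext_i)-h(s_i)\le h'(s_i)(\snext_i-s_i)=\tfrac{\beta(g_i^2-s_i)}{2(\sqrt{s_i}+\varepsilon)}.
\end{align*}
The $-\beta s_i/(\sqrt{s_i}+\varepsilon)$ part supplies decrease in $s$, which is class-$\mathcal{K}_\infty$-like since it behaves as $\sqrt{s_i}$ for large $s_i$. The positive part $\beta g_i^2/(\sqrt{s_i}+\varepsilon)$ must be dominated by the $\gamma$-decrease. The mismatch is that $\sqrt{s_i}$ appears instead of $\sqrt{\snext_i}$. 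Using $\sqrt{\snext_i}\le\sqrt{s_i}+\sqrt\beta|g_i|$, we get
\begin{align*}
\tfrac{1}{\sqrt{s_i}+\varepsilon}\le \tfrac{1}{\varepsilon+\sqrt{\snext_i}}\Big(1+\tfrac{\sqrt\beta|g_i|}{\varepsilon}\Big).
\end{align*}
The extra factor $\sqrt\beta|g_i|/\varepsilon$ is controlled by $\gone\sqrt\phi$, since $|g_i|\le\|g\|\le\sqrt{2L\phi}$. This explains the form of $\gone\propto\beta^{3/2}\sqrt L/(\etzero\varepsilon)$. The constant part is controlled by $\gzero\ge 3\beta/\etzero$ and $\gamma_{01}$. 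The remaining term $\gamma_{02}$ (with its $\sqrt d$ factors) should cover cross terms from comparing $\|g\|$ with the individual $|g_i|$ and $\phi$. The net result is $\Delta\Vv\le -\kappa\big(\sum_i g_i^2/(\varepsilon+\sqrt{\snext_i})+\sum_i s_i/(\sqrt{s_i}+\varepsilon)\big)$, which is positive definite and radially unbounded in $(x-x^*,s)$ since $\|g\|\ge\mu\|x-x^*\|$. Setting $u=0$ gives global asymptotic stability.

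\textbf{Main obstacle: the input terms.} For $u>0$ the increase of $f$ can be large. $\gamma$ is superlinear, so $\gamma'$ at the new point involves $\sqrt{f(\xnext)-f^*}$, which produces cross terms like $\gone\sqrt\phi\, u^2\|g\|^2$. These cannot be bounded by $\sigma(u)$ alone, because they grow like $\phi^{3/2}$.

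My first attempt would be to split the increment and bound the new iterate's gap directly. Since $|\xnext_i-x_i|\le(\eta/\varepsilon)|g_i|$, we have $f(\xnext)-f^*\le (1+L\eta/\varepsilon)^2 L\phi/\mu$-type bounds. I would then use Young's inequality to dominate the $u$-dependent excess by a fraction of the nominal decrease plus $\sigma(u)$. If that fails, because the growth in $\phi$ is mismatched, I would fall back on the ISS-Lyapunov formulation in implication form: $\|(x-x^*,s)\|\ge\chi(u)$ implies $\Delta\Vv\le-\alpha(\cdot)$. There the $u$-dependent terms need only be dominated when the state is large relative to $u$. This works because the nominal decrease contains $\phi/(\varepsilon+\sqrt{\snext})$ terms, and the $s$-dynamics keep $\snext$ comparable to $\|g\|^2$, which bounds the effective step by roughly $\eta/|g_i|$. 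This saturation of the RMSProp step for large gradients is what makes the $u$-terms grow only like $u\|g\|$, and hence dominable by the superlinear $\gone$ part.
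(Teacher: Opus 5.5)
Your Step 2 applies convexity of $\gamma$ in the wrong direction. Convexity gives $\gamma(b)-\gamma(a)\le\gamma'(b)(b-a)$ with $b=f(\xnext)-f(x^*)$ the \emph{new} value. For $\Delta\le 0$ this is the smaller argument, so you may not replace $\gamma'(b)$ by $\gamma'(\phi)$; convexity in fact gives the reverse inequality $\gamma(b)-\gamma(\phi)\ge\gamma'(\phi)\Delta$.

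This is not cosmetic. Your absorption of the excess $\beta^{3/2}|g_i|/\varepsilon$ from the $h$-terms rests on $\gone\sqrt{\phi}$ together with $|g_i|\le\sqrt{2L\phi}$. But only $\gamma'$ at the new value (or at a mean-value point $\xi$ between $f(\xnext)-f(x^*)$ and $\phi$, as in the paper) is available, and you give no lower bound for it in terms of $\|g\|$.

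The missing ingredient is that RMSProp steps are saturated. Since $\varepsilon+\sqrt{\snext_i}\ge\sqrt\beta|g_i|$, we have $|\xnext_i-x_i|\le\ett/\sqrt\beta$. Convexity of $f$, $\phi\ge\|g\|^2/(2L)$ and Young's inequality then yield $\xi\ge\tfrac{1}{4L}\|g\|^2-\ett^2Ld/\beta$. The $\etzero$-part of this deficit is exactly what $\gamma_{02}=\gone\etzero\sqrt{Ld/\beta}+12\beta L\sqrt d/\varepsilon$ pays for, via a case split on $\|g\|^2\lessgtr 4\ett^2L^2d/\beta$. Likewise, $\gone$ is calibrated against $\sqrt{\|g\|^2/(4L)}$ and the margin $\ett\gamma'(\xi)\varepsilon/6$ that comes from $\gzero\ge 3\beta/\etzero$, not against $\sqrt{2L\phi}$.

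Your Step 1 also loses too much for the prescribed constants. Bounding $\tfrac{L\eta}{2(\varepsilon+\sqrt{\snext_i})}$ by $\tfrac{L\eta}{2\varepsilon}$ multiplies the entire decrease by $1-\tfrac{L(\etzero+\etone)}{2\varepsilon}$. Even granting Step 2, you would then need $\gone\ge\sqrt{2L}\,\beta^{3/2}/(\varepsilon\etzero(1-\tfrac{L(\etzero+\etone)}{2\varepsilon}))$. The given $\gone=12\beta^{3/2}\sqrt L/(\etzero\varepsilon)$ violates this when $\etzero+\etone$ is close to $2\varepsilon/L$. The paper avoids this by keeping the product form $\kappa_i$: the smoothness loss competes only with the $\varepsilon(\varepsilon+\sqrt{s_i})$ part of the decrease, and the $\sqrt{\snext_i}$ part, where the $|g_i|$-excess sits, is controlled by $\gzero\ge 3\beta/\etzero$ alone.

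For $\dst>0$ you give a plan rather than a proof, and the obstacle you describe comes from the crude bound $|\xnext_i-x_i|\le\eta|g_i|/\varepsilon$. With that bound the positive term grows like $\|g\|^3$ for fixed $\dst>0$, while the one-step decrease of $\gamma(f-f(x^*))$ is at most of order $\|g\|^2$. So your Young's-inequality attempt cannot succeed.

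Your closing remark on saturation is the right idea, but it must be used quantitatively. Since $\|\xnext-x\|^2\le\ett^2d/\beta$, $L$-smoothness gives $f(\xnext)-f(x)\le\ett^2Ld/(2\beta)$. With $\phi\le\|g\|^2/(2\mu)$ this yields $\gamma'(\xi)\le\gzero+\gone\sqrt{d/(2\mu)}\,\|g\|_\infty+\ett\gone\sqrt{Ld/(2\beta)}$. Every input-dependent term then has the form $p_2(\dst)+p_3(\dst)\|g\|_\infty$ with $p_2(0)=p_3(0)=0$, and no $\phi^{3/2}$ terms arise.

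What remains, and what you assert without argument, is that the nominal decrease dominates $p_3(\dst)\|g\|_\infty$ uniformly in $s$. The $x$-part alone does not do this. When $\sqrt{\|s\|_\infty}$ is much larger than $\|g\|_\infty^2$, the effective step $\ett/(\varepsilon+\sqrt{\snext_i})$ is tiny and the $x$-decrease is far smaller than $\|g\|_\infty$.

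The paper resolves this by splitting the state space. On $\|s\|_\infty\ge(L\|x-x^*\|)^{2+2q}$, the $s$-decrease $\beta\|s\|_\infty/(\varepsilon+\sqrt{\|s\|_\infty})\gtrsim\|g\|_\infty^{1+q}$ dominates. On the complement, $\varepsilon+\sqrt{\snext_i}\lesssim\|g\|_\infty^{1+q}$, so the $x$-decrease (after optimizing over $s_m$) grows like $\|g\|_\infty^{3/2-2q}$, which is superlinear because $q<1/4$. Without such a case analysis, neither the dissipation inequality nor your implication-form fallback is established.
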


\Cref{ISS-theorem} implies that, given any choice of positive bounded step size rule, the iterates remain bounded. For a practitioner, this allows step size tuning without relying on fall-back techniques (e.g., projections onto a compact set) while balancing convergence rate and convergence accuracy (see \Cref{remark:2:trade_off}).

In the next lemma, we first show that the function \eqref{function:Lyapunov} satisfies the requirements for a Lyapunov function candidate, as stated in \cite[Definition 3.2., Condition 1]{jiang2001iss_discrete} and \cite[Theorem 3.4.6.]{michel2008stability}. 

\begin{lemma}
    \label{lemma:Lyapunov_fcn}
    Consider \eqref{function:Lyapunov}, $\gamma$ and~$h$ as defined in \eqref{lyapunov:subfunctions} and let
    Assumption \ref{assump} be satisfied with~${\mu > 0}$, ${x^{*}\in \R^d}$ and~${L > 0}$.  
    Moreover, let~${\varepsilon > 0}$,~${\beta \in (0,1)}$ and~${\etzero \in \left(0, \frac{2\varepsilon}{L}\right)}$, and let~${\etone > 0}$ satisfy~${\etzero + \etone < \frac{2\varepsilon}{L}}$. Then, there exist some~${\hat{\alpha}_1, \hat{\alpha}_2 \in \mathcal{K}_{\infty}}$ such that, for all~${(x,s) \in \rrd}$,
    \begin{align*}
        \hat{\alpha}_1(\| (x-x^{*},s)\|_{\infty}) \leq \Vv(x,s) \leq \hat{\alpha}_2(\| (x-x^{*},s)\|_{\infty}).
    \end{align*}
\end{lemma}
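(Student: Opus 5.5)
The plan is to bound the two terms of \eqref{function:Lyapunov} separately by class-$\mathcal{K}_\infty$ functions and then combine them using the norm $\|(x-x^{*},s)\|_\infty=\max\{\|x-x^{*}\|_\infty,\|s\|_\infty\}$. Throughout, I write $r\coloneqq \|(x-x^{*},s)\|_{\infty}$.

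\emph{Step 1 (the $x$-part).} By \Cref{assump}, $\mu$-strong convexity and $L$-smoothness give
\begin{align*}
\tfrac{\mu}{2}\|x-x^{*}\|^2 \leq f(x)-f(x^{*}) \leq \tfrac{L}{2}\|x-x^{*}\|^2 .
\end{align*}
Here I use $\nabla f(x^{*})=0$ and the descent lemma. Since $\gamma$ in \eqref{lyapunov:gamma} has $\gzero,\gone>0$, it is continuous, strictly increasing, zero at zero and unbounded, so $\gamma\in\mathcal{K}_\infty$. Combining this with $\|x-x^{*}\|_\infty\le\|x-x^{*}\|\le\sqrt d\,\|x-x^{*}\|_\infty$ yields
\begin{align*}
\gamma\big(\tfrac{\mu}{2}\|x-x^{*}\|_\infty^2\big)\leq \gamma(f(x)-f(x^{*}))\leq \gamma\big(\tfrac{Ld}{2}\|x-x^{*}\|_\infty^2\big).
\end{align*}

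\emph{Step 2 (the $s$-part).} I will show $h\in\mathcal{K}_\infty$. Clearly $h(0)=0$. Substituting $z=\sqrt\omega$ gives $h=\phi(\sqrt{\omega})$ with $\phi(z)=z-\varepsilon\log(1+z/\varepsilon)$. Then $\phi'(z)=z/(z+\varepsilon)>0$ for $z>0$, so $\phi$ is strictly increasing. Moreover $\phi(z)\ge z-z/2\cdot$ eventually; more precisely, $\log(1+z/\varepsilon)=o(z)$ implies $\phi(z)\to\infty$. Hence $h$ is continuous, strictly increasing, zero at zero and unbounded. For nonnegative $s$,
\begin{align*}
h(\|s\|_\infty)\le \sum_i h(s_i)\le d\,h(\|s\|_\infty),
\end{align*}
using monotonicity and nonnegativity of $h$.

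\emph{Step 3 (combine).} For the upper bound, I take
\begin{align*}
\hat\alpha_2(r)\coloneqq \gamma\big(\tfrac{Ld}{2}r^2\big)+2d\,h(r),
\end{align*}
which is a sum of $\mathcal{K}_\infty$ functions and hence in $\mathcal{K}_\infty$. For the lower bound, both terms are nonnegative, so $\Vv\ge \max\{\gamma(\tfrac{\mu}{2}\|x-x^{*}\|_\infty^2),\,2h(\|s\|_\infty)\}$. This maximum is at least
\begin{align*}
\hat\alpha_1(r)\coloneqq \min\big\{\gamma\big(\tfrac{\mu}{2}r^2\big),\,2h(r)\big\},
\end{align*}
because whichever component attains $r$ contributes its own term. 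The minimum of two $\mathcal{K}_\infty$ functions is again in $\mathcal{K}_\infty$.

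The only mildly delicate point is verifying that $h$ is unbounded and strictly increasing, i.e.\ that the logarithmic term does not spoil growth. Step 2 handles this through the derivative $\phi'$ and sublinearity of $\log$. The remaining hypotheses of the lemma (on $\beta$, $\etzero$, $\etone$) are used only to ensure $\gzero,\gone$ are well defined and positive. In particular, $2\varepsilon-(\etzero+\etone)L>0$ makes $\gamma_{01}$ positive and finite.
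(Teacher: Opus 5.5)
Your proof is correct and has the same skeleton as the paper's. You bound $\gamma(f(x)-f(x^{*}))$ and $2\sum_i h(s_i)$ separately, then take the minimum of the two comparison functions for the lower bound and their sum for the upper bound, using $\|(x-x^{*},s)\|_\infty=\max\{\|x-x^{*}\|_\infty,\|s\|_\infty\}$.

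The genuine difference is in the $x$-term:
\begin{itemize}
\item The paper only uses that $\gamma(f(x)-f(x^{*}))$ is continuous, positive definite and radially unbounded. It then invokes Khalil's Lemma~4.3 to obtain abstract $\alpha_{\gamma,m},\alpha_{\gamma,M}\in\mathcal{K}_\infty$.
\item You use the quadratic sandwich $\tfrac{\mu}{2}\|x-x^{*}\|^2\le f(x)-f(x^{*})\le\tfrac{L}{2}\|x-x^{*}\|^2$ together with norm equivalence. This gives the explicit functions $\hat\alpha_1(r)=\min\{\gamma(\tfrac{\mu}{2}r^2),2h(r)\}$ and $\hat\alpha_2(r)=\gamma(\tfrac{Ld}{2}r^2)+2d\,h(r)$.
\end{itemize}
The paper's route is slightly more general, since its upper bound needs only continuity of $f$, not $L$-smoothness. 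Yours is elementary, avoids the external lemma, and is quantitative. For example, it shows directly that $\mathcal{V}$ is sandwiched between quadratics in $x-x^{*}$ near the minimizer. That is the kind of information the paper's remark on local exponential stability uses only informally.

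Your treatment of $h$ via $z=\sqrt{\omega}$ and $\phi'(z)=z/(z+\varepsilon)$ is also cleaner than the paper's. It gives the correct derivative $h'(\omega)=\tfrac{1}{2(\sqrt{\omega}+\varepsilon)}$. The paper's proof of this lemma writes $\tfrac{1}{2(\omega+\varepsilon)}$, a typo that does not affect the positivity conclusion.

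One small fix: tidy the incomplete phrase just before ``more precisely'' in your Step~2. A clean replacement is $\phi(z)\ge\tfrac12(z-\varepsilon)$ for $z\ge\varepsilon$, which holds because $\phi'\ge\tfrac12$ there.
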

\begin{proof}
    We start with the properties of~${\gamma}$ in \eqref{lyapunov:subfunctions}. 
    By \eqref{gzero_gbar},~${\gone}$ is positive. Moreover, since by assumption we have~${\etzero  \in \left(0, \tfrac{2\varepsilon}{L}\right)}$ and~${\etzero + \etone < \frac{2\varepsilon}{L}}$, we infer~${\varepsilon - \tfrac{(\etzero + \etone) L}{2} > 0}$ and, thereby, the second term in the~${\mathrm{max}}$ operator in~${\gzero}$ in \eqref{gzero_gbar} is well defined. Consequently, we also have~${\gzero > 0}$.
    Then,~${\gamma(0) = 0}$, and in addition,~$\gamma$ is continuous, positive definite and radially unbounded on~${[0,\infty)}$. 
    
    Under Assumption~\ref{assump}~\ref{assump:convex}, we observe that~${f(x) - f(x^{*})}$, and thus~${\gamma(f(x) - f(x^{*}))}$, are continuous, positive definite in~${x \in \R^d}$ and radially unbounded. 
    Then, \cite[Lemma 4.3]{khalil2002nonlinear} implies that there exist some~${\alpha_{\gamma,m}, \alpha_{\gamma,M} \in \mathcal{K}_{\infty}}$ such that for all~${x \in \R^d\colon}$
    \begin{align}
        \label{gamma:upper_lower_comparison}
        \alpha_{\gamma,m}(\| x-x^{*}\|_{\infty}) &\leq \gamma(f(x) - f(x^{*})) \notag \\ &\leq \alpha_{\gamma,M}(\| x-x^{*}\|_{\infty}).
    \end{align}

    Next, we proceed with the properties of~$h$, defined in \eqref{lyapunov:subfunctions}. We observe that~${h(0) = 0}$ and that~${h}$ is continuous on~${[0,\infty)}$. For all~${\omega \in \R_{\geq 0}}$ we have~${h'(\omega) = \tfrac{1}{2(\omega  + \varepsilon)} > 0}$, i.e., $h$ is strictly monotonically increasing and hence,~$h$ is positive definite on~$[0,\infty)$. Finally, \eqref{lyapunov:theta} implies that the positive definite term~${\sqrt{\cdot}}$ grows at a faster rate than~${\varepsilon\log(\varepsilon) -\varepsilon\log(\sqrt{\cdot} + \varepsilon)}$ on~${[0,\infty)}$, which implies that~$h$ is radially unbounded. It therefore follows that~${h \in \mathcal{K}_{\infty}}$. 

    In the following we find suitable comparison functions for~${\Vv}$, defined in \eqref{function:Lyapunov}. We start by using the inequality
    \begin{align}
        \label{lower_bound_2}
        2\sum\limits_{i=1}^{d}h(s_i) \geq h(\|s\|_{\infty}),
    \end{align}
    which holds since~$h$ is  positive definite. By plugging the lower bounds from \eqref{gamma:upper_lower_comparison} and \eqref{lower_bound_2} into \eqref{function:Lyapunov}, one gets for all~${(x,s) \in \rrd}$ that
    \begin{align}
        \label{V:lower_bound}
        \Vv(x,s) &\geq \alpha_{\gamma,m}(\| x-x^{*}\|_{\infty}) + h(\smax)
        \notag \\ 
        &\geq \mathrm{min}\{\alpha_{\gamma,m}\left(\|(x-x^{*},s)\|_{\infty}\right), h(\|(x-x^{*},s)\|_{\infty}\},
    \end{align}
    where we used~${\mathrm{max}\{\|x-x^{*}\|_{\infty},\| s\|_{\infty}\} = \|(x-x^{*},s)\|_{\infty}}$.  The second inequality in \eqref{V:lower_bound} can be shown by a simple case analysis and is left for the reader to check.

    Next, we use the inequality
    \begin{align}
    \label{V:upper_bound_help_0}
        2\sum\limits_{i=1}^{d}h(s_i) &\leq 2dh(\|s\|_{\infty}),
    \end{align}
    which follows since~$h$ is positive definite and monotone on~${[0,\infty)}$.

    By plugging the upper bounds from \eqref{gamma:upper_lower_comparison} and \eqref{V:upper_bound_help_0} into~\eqref{function:Lyapunov}, we get for all~${(x,s) \in \rrd}$ that
    \begin{align}
        \label{V:upper_bound}
        \Vv(x,s) &\leq \alpha_{\gamma,M}(\|(x-x^{*},s)\|_{\infty})  + 2dh(\|(x-x^{*},s)\|_{\infty}).
    \end{align}
    Here, we used that the upper bounds \eqref{gamma:upper_lower_comparison} and \eqref{V:upper_bound_help_0} are strictly monotonically increasing and that
    \begin{align*}
        \|x-x^{*}\|_{\infty} &\leq \mathrm{max}\{\|x-x^{*}\|_{\infty},\| s\|_{\infty}\} = \|(x-x^{*},s)\|_{\infty}, \\
        \| s \|_{\infty} &\leq \mathrm{max}\{\|x-x^{*}\|_{\infty},\| s\|_{\infty}\} = \|(x-x^{*},s)\|_{\infty}.
    \end{align*}
    The lower and upper bounds in \eqref{V:lower_bound} and \eqref{V:upper_bound} belong to class~${\mathcal{K}_{\infty}}$. This is true, since both the minimum and the sum of~${\mathcal{K}_{\infty}}$-functions belong to class~${\mathcal{K}_{\infty}}$. This concludes the proof.
\end{proof}

In the following we provide a proof of \Cref{ISS-theorem}. 

\begin{proof}

    This proof is divided into three main parts. In the first part, we derive suitable preliminary upper bounds for the terms in the Lyapunov function difference. In the second part, we provide a refined form of the upper bound of the Lyapunov function difference which depends solely on~$\smax$,~${\| \nabla f(x) \|_{\infty}}$ and~${\dst \in \R_{\geq 0}}$. In the third part, starting from the upper bound from the second part, for algorithm \eqref{sys:rmsprop} we initially show that global asymptotic stability holds for~${\dst = 0}$ with respect to the equilibrium~${(x^{*},0)}$, by invoking \cite[Theorem 3.4.6.]{michel2008stability}.  
    Subsequently, we show that the Lyapunov function satisfies an input-to-state stability condition for any~${\dst \in \R_{\geq 0}}$, in the sense of an adaptation of \cite[Definition 3.2.]{jiang2001iss_discrete}, with respect to the equilibrium~${(x^{*},0)}$. Thereby, we get that~$\Vv$, defined in \eqref{function:Lyapunov}, is an ISS-Lyapunov function, and by invoking \cite[Lemma 3.5.]{jiang2001iss_discrete} we establish that \eqref{sys:rmsprop} is ISS for any~${\dst(\cdot) \in \Uu}$ with respect to the equilibrium~${(x^{*},0)}$.

    We start by defining the proof preliminaries. 
    Throughout the proof, for the sake of clarity, for any~${x \in \R^d}$ we use the notation
    \begin{align*}
        f_0(x)&\coloneqq f(x) - f(x^{*}), & g &\coloneqq \nabla f(x), &g_{i}\coloneqq\nabla_i f(x).
    \end{align*}
    Further, we denote the quadratic, arithmetic and geometric means as $\mathrm{QM}$, $\mathrm{AM}$ and $\mathrm{GM}$. 
    For any~${(x,s) \in \rrd}$, the Lyapunov function difference is given as
    \begin{align}
        \label{proof:theorem:difference_Lyapunov}
        & \Delta \Vv \coloneqq \Vv(\xnext, \snext) - \Vv(x,s) \notag \\
        =&\gamma(f_0(\xnext)) - \gamma(f_0(x)) + 2\sum\limits_{i=1}^{d}\big(h(\snext_i) - h(s_{i})\big),
    \end{align}
    with~${\snext}$ and~${\xnext}$ as defined in \eqref{sys:rmsprop}.
    
    Initially, we apply the mean value theorem on~${\gamma}$ as in~\eqref{lyapunov:gamma} for the case of~${f_0(\xnext) \neq f_0(x)}$. For all such~${(x,s) \in \rrd}$ and~${\dst \in \R_{\geq 0}}$, there exists some
    \begin{align*}
        \xi \in \mathcal{I}(x,s,u) \coloneqq (a_{\mathrm{min}}(x,s,u), a_{\mathrm{max}}(x,s,u)),
    \end{align*}
    where~${a_{\mathrm{min}}(x,s,u)\coloneqq\mathrm{min}\big\{f_0(x), f_0(\xnext)\big\} \geq 0}$, ${a_{\mathrm{max}}(x,s,u)\coloneqq\mathrm{max}\big\{f_0(x), f_0(\xnext)\big\} \geq 0}$, such that 
    \begin{align}
    \label{def:xi}
        \gamma'(\xi) = \tfrac{\gamma(f_0(\xnext)) - \gamma(f_0(x))}{f_0(\xnext) - f_0(x)}.
    \end{align}

    If~${f_0(\xnext) = f_0(x)}$, we set~${\xi = f_0(x) = f_0(\xnext) \geq 0}$. By~\eqref{lyapunov:gamma} and~\eqref{gzero_gbar}, we then have~${\gamma'(\xi) > 0}$, and the following trivially holds:
    \begin{align}
        \label{def:xi:2}
        \gamma(f_0(\xnext)) - \gamma(f_0(x)) = \gamma'(\xi)\left(f_0(\xnext) - f(x)\right) = 0.
    \end{align}
    Therefore, for any~${(x,s) \in \rrd}$ and any~${\dst \in \R_{\geq 0}}$ there exists some
    \begin{align}
        \label{def:xi:final}
        0 \leq \xi  \in \mathcal{I}(x,s,u) \cup\{f_0(x), f_0(\xnext)\}
    \end{align}
    such that, by using \eqref{def:xi} and~\eqref{def:xi:2}, we get from~\eqref{proof:theorem:difference_Lyapunov} that
    \begin{align}
        \label{proof:theorem:difference_Lyapunov:2}
        \Delta \Vv &=\gamma'(\xi )(f_0(\xnext) - f_0(x)) 
         + 2\sum\limits_{i=1}^{d}\big(h(\snext_{i}) - h(s_{i})\big).
    \end{align}

    \textbf{Part 1:} In this part we derive preliminary upper bounds for \eqref{proof:theorem:difference_Lyapunov:2}, which consist of negative definite and~${\dst}$-dependent components, where~${\dst \in \R_{\geq 0}}$.
    
    \textbf{Step 1.1:} In the following we derive upper bounds for the first right hand side term and each term of the right hand side sum of \eqref{proof:theorem:difference_Lyapunov:2}. We start by using the $L$-smoothness property of~$f$. Due to \Cref{assump}~\ref{assump:L_smooth} and that~${\gamma'(\xi) > 0}$ for all~${\xi  \geq 0}$ (see \eqref{lyapunov:gamma} and~\eqref{gzero_gbar}, and \eqref{def:xi:final}), it is possible to use 
    \cite[Theorem 2.1.5, (2.1.9)]{nesterov2013introductory} to get
    \begin{align*}
        \gamma'(\xi )(f_0(\xnext) - f_0(x)) &\leq \gamma'(\xi )\nabla f_0^{\top}(x)(\xnext - x) \\ &+ \gamma'(\xi )\tfrac{L}{2} \lVert \xnext - x \rVert_2^2.
    \end{align*}
    As~${\nabla f_0^{\top}(x)(\xnext - x) = \sum_{i=1}^{d}g_{i}(\xnext_{i} - x_{i})}$, we can subsequently use \eqref{sys:x} for~${\xnext_i - x_{i}}$ to get
    \begin{align}
        \label{proof:theorem:L_smoothness_ineq}
        \gamma'(\xi )(f_0(\xnext) - f_0(x)) &\leq \gamma'(\xi )\sum\limits_{i=1}^{d}g_{i}\left(- \tfrac{{\ett} g_{i}}{\varepsilon + \sqrt{\snext_{i}}}\right) \notag \\ &+  \gamma'(\xi )\tfrac{L}{2}\sum\limits_{i=1}^{d}\tfrac{{\ett}^2 g_{i}^2}{\left(\varepsilon + \sqrt{\snext_{i}}\right)^2}.
    \end{align}
   Next, we use the concavity of~$h$, defined in \eqref{lyapunov:theta}, for non-negative arguments to derive an upper bound for each summand in \eqref{proof:theorem:difference_Lyapunov:2}. By using this property, we get for every~${i \in \overline{1:d}}$ that
    \begin{align*}
        h(\snext_i) - h(s_{i}) \leq  h'(s_{i})(\snext_{i} - s_{i}),
    \end{align*}
    where $h'(s_{i}) =\tfrac{1}{2(\varepsilon + \sqrt{s_{i}})}$. By using \eqref{sys:s}, this is rewritten as
    \begin{align}
        \label{proof:theorem:concavity_ineq}
        h(\snext_{i}) - h(s_{i}) \leq \tfrac{-\beta s_{i} + \beta g_{i}^2}{2(\varepsilon + \sqrt{s_{i}})}.
    \end{align}
    
    \textbf{Step 1.2:} In the following, we plug the upper bounds from Step 1.1, i.e., \eqref{proof:theorem:L_smoothness_ineq} and \eqref{proof:theorem:concavity_ineq} into~\eqref{proof:theorem:difference_Lyapunov:2}, and subsequently reorganize the terms to get
    \begin{align*}
        \Delta \Vv &\leq -\sum\limits_{i=1}^{d}\tfrac{\beta s_{i}}{\varepsilon + \sqrt{s_{i}}} + \sum\limits_{i=1}^{d}\tfrac{g_{i}^2}{\left(\varepsilon + \sqrt{\snext_{i}}\right)^2(\varepsilon + \sqrt{s_{i}})} \notag \\ &\times \Bigg[-{\ett}\gamma'(\xi)(\varepsilon + \sqrt{s_{i}})\left(\varepsilon + \sqrt{\snext_{i}}\right) \notag \\ &+ \tfrac{{\ett}^2\gamma'(\xi)L}{2}\left(\varepsilon + \sqrt{s_{i}}\right) + \beta\left(\varepsilon + \sqrt{\snext_{i}}\right)^2\Bigg] .
    \end{align*}
   For each~${i \in \overline{1:d}}$, we now define 
    \begin{align*}
        \kappa_i &\coloneqq  -{\ett}\gamma'(\xi)(\varepsilon + \sqrt{s_{i}})\left(\varepsilon + \sqrt{\snext_{i}}\right) \\ &+ \tfrac{{\ett}^2\gamma'(\xi)L}{2}(\varepsilon + \sqrt{s_{i}}) + \beta\left(\varepsilon + \sqrt{\snext_{i}}\right)^2,
    \end{align*}
    and rearrange as
    \begin{align}
        \label{proof:theorem:K:0}
        \kappa_{i}&= -\sqrt{\snext_{i}}\left({\ett}\gamma'(\xi)\sqrt{s_{i}} - \beta\sqrt{\snext_{i}}\right) \notag \\ &- {\ett}\gamma'(\xi )\sqrt{s_{i}}\left(\varepsilon -\tfrac{\ett L }{2}\right) \notag \\& - 2\tfrac{\varepsilon}{2}\sqrt{\snext_{i}}{\ett}\gamma'(\xi )\left(1 - \tfrac{2\beta}{{\ett} \gamma'(\xi)}\right) \notag \\ &-\varepsilon\left({\ett}\gamma'(\xi )\left(\varepsilon - \tfrac{\ett L}{2}\right) - \beta\varepsilon\right).
    \end{align}
    Then, we get that 
    \begin{align}
        \label{proof:delta_V_ineq}
        \Delta \Vv \leq -\sum\limits_{i=1}^{d}\tfrac{\beta s_{i}}{\varepsilon + \sqrt{s_{i}}} + \sum\limits_{i=1}^{d}\tfrac{g_{i}^2\kappa_{i}}{\left(\varepsilon + \sqrt{\snext_{i}}\right)^2(\varepsilon + \sqrt{s_{i}})}.
    \end{align}
    The inequality \eqref{proof:delta_V_ineq} is an important intermediate result. 
    For the remainder of the proof, we derive suitable upper bounds for the terms on the right hand side of \eqref{proof:delta_V_ineq}.
    Also, observe that, in \eqref{proof:delta_V_ineq}, the first sum on the right hand side is negative definite in~${s \in \R_{\geq 0}^d}$.
    In particular, in the remainder of this part of the proof, we focus on the terms~${\kappa_i}$,~${i \in \overline{1:d}}$, in \eqref{proof:theorem:K:0}.
    For convenience, we define 
     $${\kappa_i = a^{i}_{11} + a^{i}_{21} + 2a^{i}_{31}\sqrt{\snext_i} +\varepsilon a^{i}_{41}}$$ for each~${i \in \overline{1:d}}$ with
    \begin{subequations}
    \label{terms_a_i}
    \begin{align}
        a^{i}_{11} & \coloneqq -\sqrt{\snext_i}\left({\ett}\gamma'(\xi)\sqrt{s_{i}} - \beta\sqrt{\snext_{i}}\right), \\
        a^{i}_{21} &\coloneqq - {\ett}\gamma'(\xi )\left(\varepsilon -\tfrac{\ett L }{2}\right)\sqrt{s_i}, \\
        a^{i}_{31} &\coloneqq -{\ett}\gamma'(\xi )\tfrac{\varepsilon}{2}\left(1 - \tfrac{2\beta}{{\ett} \gamma'(\xi)}\right), \\
        a^{i}_{41} &\coloneqq -{\ett}\gamma'(\xi )\left(\varepsilon - \tfrac{\ett L}{2}\right) + \beta\varepsilon.
    \end{align}
    \end{subequations}
      
    \textbf{Step 1.3:} In the following, we provide upper bounds for~${a_{11}^{i}, a_{31}^{i}}$ and~$a_{41}^{i}$,~${i \in \overline{1:d}}$, as defined in \eqref{terms_a_i}. We start by analyzing the terms ${a^{i}_{11}}$. We use that~${a-b = \tfrac{a^2-b^2}{a+b}}$ to get
    \begin{align*}
        a^{i}_{11}= -\sqrt{\snext_i}\tfrac{({\ett}\gamma'(\xi ))^2s_{i} - \beta^2\snext_{i}}{{\ett}\gamma'(\xi )\sqrt{s_{i}} + \beta\sqrt{\snext_{i}}}.
    \end{align*}
    By using \eqref{sys:s} on $\snext_{i}$ in the numerator, we further obtain
    \begin{align*}
        a^{i}_{11} &= -\sqrt{\snext_i}\tfrac{[({\ett}\gamma'(\xi ))^2 - \beta^2(1-\beta)]s_{i} - \beta^3g_{i}^2}{{\ett}\gamma'(\xi )\sqrt{s_{i}} + \beta\sqrt{\snext_{i}}} \\
        & =-\sqrt{\snext_i}\tfrac{[({\ett}\gamma'(\xi ))^2 - \beta^2(1-\beta)]s_{i}}{{\ett}\gamma'(\xi )\sqrt{s_{i}} + \beta\sqrt{\snext_{i}}}  \\&+ \sqrt{\snext_i}\tfrac{\beta^3g_{i}^2}{{\ett}\gamma'(\xi )\sqrt{s_{i}} + \beta\sqrt{\snext_{i}}}.
    \end{align*}
    We define now~${a^{i}_{12}\coloneqq -\sqrt{\snext_i}\tfrac{({\ett}\gamma'(\xi ))^2- \beta^2(1-\beta)}{{\ett}\gamma'(\xi )\sqrt{s_{i}} + \beta\sqrt{\snext_{i}}}}s_{i}$
    and~${a^{i}_{13}\coloneqq \sqrt{\snext_i} \tfrac{\beta^3 g_i^2}{{\ett}\gamma'(\xi )\sqrt{s_{i}} + \beta\sqrt{\snext_{i}}}}$ to observe that~${a_{11}^{i} = a_{12}^{i} + a_{13}^{i}}$.
    
    We start by upper bounding~${a_{12}^{i}}$. By using algebraic manipulations,
    we get
    \begin{align*}
        a^{i}_{12} = -{\ett}\gamma'(\xi)\tfrac{1 - \tfrac{\beta^2(1-\beta)}{({\ett}\gamma'(\xi))^2}}{\tfrac{\sqrt{s_i}}{\sqrt{\snext_i}} + \tfrac{\beta}{{\ett}\gamma'(\xi)}}s_i.
    \end{align*}
    Note from \eqref{gzero_gbar} that~${\gzero \geq \tfrac{3\beta}{\etzero} > 0}$ and~${\gone > 0}$, and thereby~${\gamma}$, defined in~\eqref{lyapunov:gamma}, is monotonically increasing on~$[0,\infty)$. We now use that~${\dst \geq 0}$ and~${\gzero \geq \tfrac{3\beta}{\etzero}}$ to get
    \begin{align*}
        {\ett}\gamma'(\xi)&\geq \etzero \gzero, \hspace{0.25cm}(*) &  \etzero \gzero &\geq 3\beta, \hspace{0.25cm}(**)
    \end{align*}
    and from \eqref{sys:s} that~${\sqrt{\snext_i} = \sqrt{(1-\beta)s_i + \beta g_i^2} \geq \sqrt{1-\beta}\sqrt{s_i}}$ 
    to obtain
    \begin{align}
        \label{a_12:upper_bound}
        a^{i}_{12} &\overset{(*)}{\leq} -{\ett}\gamma'(\xi)\tfrac{1 - \tfrac{\beta^2(1-\beta)}{(\etzero \gzero)^2}}{\tfrac{\sqrt{s_i}}{\sqrt{(1-\beta)s_i}} + \tfrac{\beta}{\etzero \gzero}}s_i \notag
        \\ &\overset{(**)}{\leq} -{\ett}\gamma'(\xi) \tfrac{8 + \beta}{\tfrac{9}{\sqrt{1-\beta}} + 3}s_i \eqqcolon -{\ett}\gamma'(\xi) c_{\gamma} s_i.
    \end{align}
    
    We proceed by upper bounding~${a_{13}^{i}}$. Observe that, since~${s_i \geq 0}$ and from \eqref{lyapunov:gamma} and \eqref{gzero_gbar} that~${\gamma'(\xi) > 0}$, we have 
    \begin{align}
        \label{step:1:3:1}
        {\ett}\gamma'(\xi )\sqrt{s_{i}} + \beta\sqrt{\snext_{i}} \geq \beta \sqrt{\snext_i}.
    \end{align}
    Additionally, from \eqref{sys:s},~${\snext_i = (1-\beta)s_i + \beta g_i^2 \geq \beta g_i^2}$, which we plug together with \eqref{step:1:3:1} into~${a_{13}^{i}}$ to get
    \begin{align}
        \label{a_13:upper_bound}
        a^{i}_{13} \leq \sqrt{\snext_i}\tfrac{\beta^3g_i^2}{\beta\sqrt{\beta g_i^2}} = \sqrt{\snext_i}\beta^{\frac{3}{2}} \lvert g_i \rvert. 
    \end{align}
     Thus, by using \eqref{a_12:upper_bound} and \eqref{a_13:upper_bound}, we get
     \begin{align}
         \label{a_11:ineq}
         a_{11}^{i} = a_{12}^{i} + a_{13}^{i} \leq -{\ett}\gamma'(\xi) c_{\gamma} s_i + \sqrt{\snext_i}\beta^{\frac{3}{2}} \lvert g_i \rvert.
     \end{align}

    Next, we use $(*)$, $(**)$ and that~${\gamma'}$ is positive on~${[0,\infty)}$. Then, for~${a_{31}^{i}}$, defined in \eqref{terms_a_i}, we get
    \begin{align}
        \label{a_31:upper_bound}
        a^{i}_{31} \overset{(*)}{\leq} -{\ett}\gamma'(\xi )\tfrac{\varepsilon}{2}\left(1 - \tfrac{2\beta}{\etzero \gzero}\right) \overset{(**)}{\leq} -{\ett}\gamma'(\xi)\tfrac{\varepsilon}{6}.
    \end{align}
    For~${a_{41}^{i}}$, defined in \eqref{terms_a_i}, we first add and subtract the term~${\ett\gamma'(\xi)\tfrac{\etone L}{2}}$, and by a subsequent rearrangement, we get
    \begin{align}
        \label{a_41_prelim}
        a^{i}_{41} &= -\ett \gamma'(\xi) \left(\varepsilon - \tfrac{(\etzero + \etone)  L}{2}\right) - \ett\gamma'(\xi)\frac{\ettwo}{\varepsilon} \notag \\ &+ \beta\varepsilon + {\dt}{\ett}\gamma'(\xi) \tfrac{ L}{2},
    \end{align}
    where~${\ettwo\coloneqq \tfrac{\etone\varepsilon L}{2}}$. 
    Since~${\varepsilon - \frac{(\etzero + \etone) L}{2} > 0}$ holds by assumption, i.e., since~${\etzero + \etone < \tfrac{2\varepsilon}{L}}$, we use the inequality~$(*)$ on the first right hand side term in \eqref{a_41_prelim}. We then get 
    \begin{align}
        \label{a_41_prelim:2}
        a^{i}_{41} &\leq -\etzero \gzero \left(\varepsilon - \tfrac{(\etzero + \etone)  L}{2}\right) - \ett\gamma'(\xi)\frac{\ettwo}{\varepsilon} \notag \\ &+ \beta\varepsilon + {\dt}{\ett}\gamma'(\xi) \tfrac{ L}{2}.
    \end{align}
    Moreover,~${\gamma_0 \geq \tfrac{\beta \varepsilon}{\etzero \left(\varepsilon - \tfrac{(\etzero + \etone)  L }{2}\right)}}$ directly follows from \eqref{gzero_gbar}, which we then plug into the right hand side of~\eqref{a_41_prelim:2} to get
    \begin{align}
        \label{a_41:upper_bound}
        a^{i}_{41} \leq - \ett\gamma'(\xi)\frac{\ettwo}{\varepsilon} + {\dt}{\ett}\gamma'(\xi) \tfrac{ L}{2}.
    \end{align}
    By plugging \eqref{a_11:ineq}, \eqref{a_31:upper_bound} and \eqref{a_41:upper_bound} into the right hand side of~\eqref{proof:theorem:K:0}, we get for all~${i \in \overline{1:d}}$ that
    \begin{align}
        \label{proof:theorem:K}
        \kappa_{i}&\leq -{\ett}\gamma'(\xi)c_{\gamma}s_{i} - a^{i}_{21} -{\ett}\gamma'(\xi)\tfrac{\varepsilon}{6}\sqrt{\snext_i}  \notag \\
        &- \ett\gamma'(\xi)\ettwo +{\dt}{\ett}\gamma'(\xi) \tfrac{ \varepsilon L}{2} \notag \\ &+ \sqrt{\snext_i}\left( - {\ett}\gamma'(\xi)\tfrac{\varepsilon}{6} + \beta^{\frac{3}{2}} \lvert g_i \rvert \right).
    \end{align}

    \textbf{Step 1.4:} In the following we analyze the terms
    \begin{align}
        \label{a_51_midway:1}
        a^{i}_{51}\coloneqq - {\ett}\gamma'(\xi)\tfrac{\varepsilon}{6} + \beta^{\frac{3}{2}} \lvert g_i \rvert
    \end{align}
    from \eqref{proof:theorem:K} and show for all~${i \in \overline{1:d}}$ that~${a_{51}^{i} \leq \dt(c_1 + c_2\dt)}$ holds for some positive constants~${c_1}$ and~$c_2$.
    We first require a lower bound for~${\xi}$, defined in \eqref{def:xi:final}, for which an equivalent representation is given by
    \begin{align}
        \label{def:xi:lambda}
        \xi = \lambda f_0(\xnext) + (1-\lambda) f_0(x)
    \end{align}
    for some~${\lambda \in [0,1]}$. Under \Cref{assump}~\ref{assump:convex},~$f$ is (strongly) convex, from where we get~${f_0(\xnext) \geq f_0(x) + \nabla f^{\top}(x)(\xnext - x)}$. By plugging this lower bound into the right hand side of \eqref{def:xi:lambda} and by using that~${s_i \geq 0}$ and~${\varepsilon > 0}$, we get 
    \begin{align}
        \label{xi_midstep:1}
        \xi &\geq f_0(x) - \lambda{\ett}\sum\limits_{i=1}^{d} \tfrac{g_i^2}{\varepsilon + \sqrt{(1-\beta)s_i + \beta g_i^2}} \notag \\ & \geq f_0(x) - \lambda{\ett}\sum\limits_{i=1}^{d} \tfrac{g_i^2}{\sqrt{\beta g_i^2}} \geq f_0(x) - \tfrac{{\ett}}{\sqrt{\beta}}\sum\limits_{i=1}^{d} \lvert g_i \rvert,
    \end{align}
    where in the last inequality we used that $\lambda \leq  1$. In addition,  we use  that~${\mathrm{AM}\geq \mathrm{GM}}$ to get
    \begin{align}
        \label{norm_g_i_upper_bound}
        \lvert g_i \rvert \leq w(\dst) g_i^2 + \tfrac{1}{4w(\dst)}
    \end{align}
    for any~${w(\dst) > 0}$.
    Choose~${w(\dst) \coloneqq \tfrac{\sqrt{\beta}}{4{\ett} L}}$. Moreover, under \Cref{assump}~\ref{assump:convex} and~\ref{assump:L_smooth} we have~${f_0(x) \geq \tfrac{\lVert g \rVert^2}{2 L}}$ (see e.g., 
    \cite[Theorem 2.1.5, (2.1.10)]{nesterov2013introductory}), which we plug together with \eqref{norm_g_i_upper_bound} into the right hand side of \eqref{xi_midstep:1}
    to get 
    \begin{align}
    \label{proof:xi_lower_bound}
        \xi \geq \tfrac{1}{4 L}\lVert g \rVert^2 - \ett^2c_L\eqqcolon \xi_0({\ett}),
    \end{align}
    where~${c_L\coloneqq\tfrac{Ld}{\beta}}$.
    From \eqref{def:xi:final} it follows directly that~${\xi \geq 0}$, thus, we derive the lower bound
    \begin{align}
        \label{xi:lower_bound_prelim}
        \xi \geq \mathrm{max}\big\{0, \xi_0({\ett})\big\}.
    \end{align}
    
    We now analyze the term~${a_{51}^{i}}$ by plugging into \eqref{xi:lower_bound_prelim} in~${\gamma'(\xi) = \gzero + \gone \sqrt{\xi} }$ on the right hand side of \eqref{a_51_midway:1}, i.e.,
    \begin{align}
        \label{a_51:prelim_bound:1}
        a^{i}_{51} \leq -\ett\tfrac{\gzero \varepsilon}{6} - \ett \tfrac{\gone \varepsilon}{6}\sqrt{\mathrm{max}\big\{0, \xi_0({\ett})\big\}} + \beta^{\frac{3}{2}}\lvert g_i\rvert.
    \end{align}
    We observe the two possible outcomes of~${\mathrm{max}\big\{0, \xi_0({\ett})\big\}}$, with~${\xi_0(\dst)}$ as in \eqref{proof:xi_lower_bound}, and therefore define the partition
    \begin{subequations}
    \label{partition:G:1}
        \begin{align}
            G_1&\coloneqq \{x \in \R^d: \|g\|^2 \leq {\ett}^2 4Lc_L\}, \\
            G_2&\coloneqq \{x \in \R^d: \|g\|^2 > {\ett}^2 4Lc_L\}.
        \end{align}
    \end{subequations}
    We now find an upper bound of the right hand side of \eqref{a_51:prelim_bound:1} over both partition sets which holds for all~${i \in \overline{1:d}}$.
    
    First, we analyze the right hand side of \eqref{a_51:prelim_bound:1} 
    for any~${x \in G_1}$. In this case, we have~${\mathrm{max}\big\{0, \xi_0({\ett})\big\} = 0}$, and by using that~${g_i^2 \leq \|g\|^2 \leq \ett^2 4Lc_L}$ and with~${c_L = \tfrac{Ld}{\beta}}$, we get
    \begin{align*}
        a^{i}_{51} \leq {\ett} \left( -\tfrac{\varepsilon}{6}\gzero + 2\beta  L \sqrt{d}\right)\eqqcolon a_{52}.
    \end{align*}
    From \eqref{gzero_gbar} it follows that~${\gzero > \tfrac{12 \beta L \sqrt{d}}{\varepsilon}}$, thus~${a_{52} \leq 0}$. 
    
    Second, we analyze the right hand side of~\eqref{a_51_midway:1} for any~${x \in G_2}$. Then, we have~${\mathrm{max}\big\{0, \xi_0({\ett})\big\} = \xi_0(\dst) > 0}$, and we use the inequality~${\sqrt{a - b} \geq \sqrt{a} - \sqrt{b}}$ on~${\sqrt{\xi_0(\dst)}}$,  which holds for any~${a \geq b \geq 0}$ . Subsequently, by plugging the result into the right hand side of \eqref{a_51:prelim_bound:1} and rearranging the terms, we get
    \begin{align}
    \label{a_51:prelim}
        a^{i}_{51} &\leq {\ett}\tfrac{\varepsilon}{6}\left(-\gamma_0 + {\ett}\gamma_1 \sqrt{c_L}\right) - {\ett}\tfrac{\gone\varepsilon}{12\sqrt{L}}\lVert g\rVert \notag \\ &+ \beta^{\frac{3}{2}}\lvert g_i \rvert.
    \end{align}
    From \eqref{gzero_gbar} one can directly verify that~${\gzero > \etzero\gone\sqrt{c_L}}$. By using that~${\dst \geq 0}$, as well as norm inequalities, we further get~${ {\ett} \lVert g \rVert \geq \etzero  \lvert g_i \rvert }$. By plugging these two lower bounds into the right hand side of~\eqref{a_51:prelim}, we get 
    \begin{align*}
        a^{i}_{51} &\leq {\dt}{\ett}\tfrac{\gamma_1\varepsilon\sqrt{c_L}}{6} - \tfrac{{\etzero}\gone\varepsilon}{12\sqrt{L}}|g_i| + \beta^{\frac{3}{2}}\lvert g_i \rvert.
    \end{align*}
    Finally, by plugging in~${\gone}$, defined in \eqref{gzero_gbar}, in the second right hand side term of the above relation, we get 
    \begin{align}
        \label{a_51_upper bound}
        a_{51}^{i} \leq {\dt}{\ett}\tfrac{\gamma_1\varepsilon\sqrt{c_L}}{6} \eqqcolon {\dt} {\ett} a_{53}.
    \end{align}
    Thus, $a_{51}^{i} \leq \max\{a_{52}, {\dt} {\ett} a_{53}\} = {\dt} {\ett} a_{53}$, which we then plug into the last right hand side term of \eqref{proof:theorem:K} to get
    \begin{align}
        \label{proof:theorem:K:next}
        \kappa_{i}&\leq -{\ett}\gamma'(\xi)c_{\gamma}s_{i} - a^{i}_{21} -{\ett}\gamma'(\xi)\tfrac{\varepsilon}{6}\sqrt{\snext_i} \notag \\
        &- \ett\gamma'(\xi)\ettwo +\dst{\ett}\gamma'(\xi) \tfrac{\varepsilon L}{2}  + \dt\ett a_{53}\sqrt{\snext_i}.
    \end{align}

    \textbf{Step 1.5:}
    In the following, we finalize the upper bounds for~$\kappa_i$,~${i \in \overline{1:d}}$, in \eqref{proof:theorem:K:next}. We first find an upper and a lower bound for all $\snext_i$,~${i \in \overline{1:d}}$, and then plug them into the right hand side of \eqref{proof:theorem:K:next}. Starting from \eqref{sys:s} and by using that~${\mathrm{QM} \geq \mathrm{AM}}$, i.e.,~${-\mathrm{QM} \leq -\mathrm{AM}}$, we have 
    \begin{align}
        \label{step:1:5:1}
        &-\sqrt{\snext_i} = -\sqrt{2}\tfrac{1}{\sqrt{2}}\sqrt{(1-\beta) s_i + \beta g_i^2} \notag \\ &\leq -\tfrac{\sqrt{2(1-\beta)}}{2}\sqrt{s_i} - \tfrac{\sqrt{2\beta}}{2} \lvert g_i \rvert \eqqcolon - a_{61}\sqrt{s_i} - a_{62} \lvert g_i \rvert,
    \end{align}
    and further, by using the sub-additivity property of~${\sqrt{\cdot}}$, i.e.,~${\sqrt{a + b} \leq \sqrt{a} + \sqrt{b}}$ for any~${a,b \geq 0}$, we get
    \begin{align}
    \label{step:1:5:2}
        \sqrt{\snext_i} &= \sqrt{(1-\beta) s_i + \beta g_i^2} \leq \sqrt{1-\beta}\sqrt{s_i} + \sqrt{\beta}\lvert g_i \rvert.
    \end{align}
    Hence, by plugging \eqref{step:1:5:1} and \eqref{step:1:5:2} into the third and sixth term on the right hand side of \eqref{proof:theorem:K:next}, respectively, we get
    \begin{align*}
        \kappa_{i}&\leq -{\ett}\gamma'(\xi)c_{\gamma}s_{i} - a^{i}_{21} \notag \\ & -{\ett}\gamma'(\xi)\tfrac{\varepsilon}{6} \left(a_{61}\sqrt{s_i} + a_{62} \lvert g_i \rvert\right) - \ett\gamma'(\xi)\ettwo \notag \\ &+\!{\dt}{\ett}\gamma'(\xi) \tfrac{\varepsilon L}{2} \!+\! {\dt} {\ett} a_{53}\sqrt{1-\beta}\sqrt{s_i}  \notag \\ &+ {\dt} {\ett} a_{53}\sqrt{\beta} \lvert g_i \rvert.
    \end{align*}
    By using algebraic manipulations and plugging in~${a_{21}^i}$ from~\eqref{terms_a_i}, we further get
    \begin{align}
    \label{proof:theorem:K:12}
        \kappa_{i}&\leq {\ett}\gamma'(\xi)\Big(-c_{\gamma}s_{i} - \sqrt{s_i}\left(\varepsilon - \tfrac{\etzero  L}{2}\right) -\tfrac{\varepsilon}{6}a_{61}\sqrt{s_i} \notag \notag \\ &- a_7 \lvert g_i \rvert - \ettwo +{\dt}(\sqrt{s_i} + \varepsilon) \tfrac{ L}{2}\notag  \\ &+ {\dt}  a_{53}\sqrt{1-\beta}\tfrac{\sqrt{s_i}}{\gamma'(\xi)} + {\dt} a_{53}\sqrt{\beta} \tfrac{\lvert g_i \rvert}{\gamma'(\xi)}\Big),
    \end{align}
    where~${a_7\coloneqq \tfrac{\varepsilon a_{62}}{6}}$.
    Trivially, it holds that~${\tfrac{\sqrt{s_i}}{\gamma'(\xi)} \leq \tfrac{\sqrt{s_i}}{\gzero}}$ due to \eqref{lyapunov:gamma} and \eqref{gzero_gbar}, and~${\xi \geq 0}$. 
    
    In the following, we upper bound~${ \tfrac{|g_i|}{\gamma'(\xi)}}$ for all~${|g_i| \geq 0}$. By using \eqref{lyapunov:gamma}, ${\gzero, \gone > 0}$ from \eqref{gzero_gbar} and~${\xi_0(\dst)}$ from~\eqref{xi:lower_bound_prelim}, we get
    \begin{align}
    \label{ratio_g_xi:1}
        \tfrac{\lvert g_i \rvert}{\gamma'(\xi)} &= \tfrac{\lvert g_i \rvert}{\gzero + \gamma_1\sqrt{\xi}} \leq \tfrac{\lvert g_i \rvert}{\gzero + \gamma_1\sqrt{\mathrm{max}\{0, \xi_0({\ett})\}}} 
        \notag \\ & \leq \tfrac{\lvert g_i \rvert}{\gzero + \gamma_1\sqrt{\mathrm{max}\Big\{0, \tfrac{1}{4L}g_i^2 - {\ett}^2c_L \Big\}}},
    \end{align}
    where, to obtain the right hand side upper bound, we used that~${g_i^2 \leq \lVert g \rVert^2}$, and where~${c_L=\tfrac{Ld}{\beta}}$.
    We observe the two possible outcomes of~${\mathrm{max}\Big\{0, \tfrac{1}{4L}g_i^2 - \ett^2c_L\Big\}}$, and define the following partitions for all~${i \in \overline{1:d}}$:
    \begin{subequations}
    \label{partition:G:2}
        \begin{align}
            G_3^i&\coloneqq \{x \in \R^d: g_i^2 \leq {\ett}^2 4Lc_L\} \\
            G_4^{i}&\coloneqq \{x \in \R^d: g_i^2 > {\ett}^2 4Lc_L\}.
        \end{align}
    \end{subequations}
    We now find a supremum of the right hand side of \eqref{ratio_g_xi:1} over the partition sets which holds for all~${i \in \overline{1:d}}$.
    
    We first consider any~${x \in G_3^i}$. We then have~${g_i^2  \leq {\ett}^24Lc_L}$, and thereby for \eqref{ratio_g_xi:1} we have
    \begin{align}
        \label{max_ratio_1}
        \tfrac{\lvert g_i \rvert}{\gamma'(\xi)} \leq \ett\tfrac{2\sqrt{Lc_L}}{\gzero} \eqqcolon\lin_1(\dst).
    \end{align}
    
    Second, we consider any~${x \in G_4^i}$. We then have~${g_i^2  > {\ett}^24Lc_L}$. We use the bound~${\sqrt{a-b} \geq \sqrt{a} - \sqrt{b}}$ for any~${a \geq b \geq 0}$, and plug~${{\ett} = \etzero  + {\dt}}$ into the right hand side of~\eqref{ratio_g_xi:1}
    to get
    \begin{align*}
        \tfrac{\lvert g_i \rvert}{\gamma'(\xi)} \leq \tfrac{\lvert g_i \rvert}{\gamma_0 - \gone\etzero \sqrt{c_L} + \tfrac{\gone}{2\sqrt{L}}\lvert g_i \rvert - {\dt}\gone \sqrt{c_L}}.
    \end{align*}
    From \eqref{gzero_gbar} we have ${\gzero \geq \gamma_1\etzero \sqrt{c_L} + \tfrac{12 \beta L \sqrt{d}}{\varepsilon}}$, thus
    \begin{align}
    \label{ratio_max}
        \tfrac{\lvert g_i \rvert}{\gamma'(\xi)} \leq \tfrac{\lvert g_i \rvert}{\tfrac{12 \beta L \sqrt{d}}{\varepsilon} + \tfrac{\gone}{2\sqrt{L}}\lvert g_i \rvert - \dst\gone \sqrt{c_L}}.
    \end{align}

    We now analyze the right hand side of~\eqref{ratio_max} as a function of~${|g_i|}$, and find an upper bound which holds for all~${ g_i^2  > \ett^24Lc_L}$, i.e., for all~${x \in G_{4}^i}$.
    First, if~${\tfrac{12 \beta L \sqrt{d}}{\varepsilon} - \dst\gone \sqrt{c_L} \geq 0}$, then the right hand side of~\eqref{ratio_max} is upper bounded by its supremum, which occurs for~${|g_i| \to \infty}$. Second, if~${\tfrac{12 \beta L \sqrt{d}}{\varepsilon} - \dst\gone \sqrt{c_L}  < 0}$, then the right hand side of~\eqref{ratio_max} is upper bounded by the supremum at~${g_i^2 = \ett^24Lc_L}$.
    Thus, we have 
    \begin{align}
        \label{max_ratio_2}
        \tfrac{\lvert g_i \rvert}{\gamma'(\xi)} &\leq \mathrm{max}\Big\{\tfrac{2 \sqrt{L}}{\gamma_1}, \ett c_m \Big\} \leq \tfrac{2 \sqrt{L}}{\gamma_1} + \ett c_m \eqqcolon \lin_2(\dst),
    \end{align}
    where ${c_m:=\tfrac{2\varepsilon\sqrt{Lc_L}}{12\beta\sqrt{d}+ \etzero\gone \varepsilon \sqrt{c_L}}}$.
    
    Then, we get from \eqref{max_ratio_1} and \eqref{max_ratio_2} for all~${x \in \R^d}$ that~${\tfrac{\lvert g_i \rvert}{\gamma'(\xi)} \leq \mathrm{max}\big\{\lin_1(\dst), \lin_2(\dst)\big\} \leq \lin_1(\dst) +\lin_2(\dst)\eqqcolon\lin_3({\dt})}$, and further recall that~${\tfrac{\sqrt{s_i}}{\gamma'(\xi)} \leq \tfrac{\sqrt{s_i}}{\gzero}}$. By plugging both upper bounds into the right hand side of~\eqref{proof:theorem:K:12} and rearranging the terms, we get
    \begin{subequations}
    \begin{align}
        \label{proof:theorem:K:2}
        \kappa_{i} \leq {\ett}& \gamma'(\xi)\big( {\dt} \lin_5({\dt}) \!- \!c_{\gamma}s_{i}\! + \!\lin_4({\dt}) \sqrt{s_i} \! - \! \ettwo \!- \! a_7 \lvert g_i \rvert \big), \\
        \label{nu_delta}
        \lin_4({\dt}) &\coloneqq  \tfrac{\etzero  L}{2} -\varepsilon -\tfrac{\varepsilon}{6}a_{61} + {\dt}\tfrac{ L}{2} + {\dt} a_{53}\tfrac{\sqrt{1-\beta}}{\gzero}, \\
        \label{phi_delta}
        \lin_5({\dt}) &\coloneqq \tfrac{\varepsilon L}{2} + a_{53}\sqrt{\beta}\lin_3({\dt}).
    \end{align}
    \end{subequations}

    \textbf{Part 2:} In this part, as outlined at the beginning of the proof, we derive a suitable upper bound for \eqref{proof:delta_V_ineq} which depends solely on~${\gmax}$,~${\smax}$ and~${\dst \in \R_{\geq 0}}$.

    \textbf{Step 2.1:} In the following, we derive an upper and a lower bound for $\xi$, given in \eqref{def:xi:lambda}, which are dependent only on~${\gmax}$ and~${\dst}$. From \eqref{def:xi:final} it follows that~${\xi \geq 0}$, and from norm inequalities that~${\lVert g \rVert^2 \geq \gmax^2}$. Thus, we lower bound the right hand side of  \eqref{xi:lower_bound_prelim} as follows:
    \begin{align}
        \label{xi_g_m_lower_bound}
        \xi & \geq \mathrm{max}\Big\{0, \tfrac{1}{4L}\gmax^2 - \ett^2c_L\Big\}.
    \end{align}
    
    Next, we find an upper bound for~\eqref{def:xi:lambda}. Under \Cref{assump}~\ref{assump:L_smooth}, we have 
    \begin{align*}
        f_0(\xnext) - f_0(x) \leq \nabla f_0^{\top}(x)(\xnext - x) + \tfrac{L}{2} \lVert \xnext - x \rVert_2^2,
    \end{align*}
    which we rewrite as in \eqref{proof:theorem:L_smoothness_ineq} and subsequently plug it into the right hand side of~\eqref{def:xi:lambda}. We then have
    \begin{align*}
        \xi \leq f_0(x) - \lambda {\ett} \sum\limits_{i=1}^{d} \tfrac{g_i^2}{\varepsilon + \sqrt{\snext_i}}  + \tfrac{\lambda {\ett}^2 L}{2} \sum\limits_{i=1}^{d} \tfrac{g_i^2}{(\varepsilon + \sqrt{\snext_i})^2}.
    \end{align*}
    We use~${0 \leq \lambda \leq 1}$, and, following from \eqref{sys:s} and~${\varepsilon > 0}$, that~${\varepsilon + \sqrt{\snext_i} \geq \sqrt{\beta g_i^2}}$ to get
    \begin{align*}
        \xi \leq f_0(x) + \ett^2\tfrac{ L}{2} \sum\limits_{i=1}^{d}\tfrac{g_i^2}{\beta g_i^2} \leq f_0(x) + \ett^2\tfrac{ c_L}{2},
    \end{align*}
    with~${c_L=\tfrac{Ld}{\beta}}$.
    Finally, under \Cref{assump}~\ref{assump:convex} with~${\mu > 0}$ it holds that~${2\mu f_0(x) \leq \lVert g \rVert^2}$ (see, e.g., 
    \cite[Theorem 2.1.10, (2.1.24)]{nesterov2013introductory}), and by further using~${\lVert g \rVert^2 \leq d \G^2}$ we get
    \begin{align}
    \label{xi_g_m_upper_bound}
        \xi \leq \tfrac{d }{2\mu}\G^2 + \ett^2\tfrac{ c_L}{2}.
    \end{align}

    \textbf{Step 2.2:} In the following, we provide two upper bounds for $\tfrac{g_{i}^2\kappa_i}{(\varepsilon + \sqrt{\snext_i})^2(\varepsilon + \sqrt{s_i})}$, $i \in \overline{1:d}$, as given in~\eqref{proof:delta_V_ineq}, which depend solely on~${\gmax}$,~${\smax}$ and~$\dst$. Observe first in \eqref{proof:theorem:K:2} that the upper bound of~$\kappa_i$ contains the term~${\ett\gamma'(\xi)\left(-c_\gamma s_i + \lin_{4}({\dt}) \sqrt{s_i} + {\dt} \lin_{5}({\dt})\right)}$. 
    By analyzing the left hand side of
    \begin{align*}
        -c_\gamma s_i + \lin_{4}({\dt}) \sqrt{s_i} + {\dt} \lin_{5}({\dt}) \leq 0
    \end{align*}
    as a quadratic polynomial in~$\sqrt{s_i}$, we get that the left hand side expression attains a maximum for~$\sqrt{s_i} = \tfrac{\lin_{4}({\dt})}{2c_\gamma}$. Moreover, we then get for all~${s_i> \left(\tfrac{\lin_{4}({\dt})}{2c_\gamma} + \tfrac{\sqrt{\lin_{4}^2({\dt}) +{\dt}\lin_{5}({\dt})4c_\gamma}}{2c_\gamma}\right)^2}$ that~$\kappa_i$ is negative, and this holds for all~${\lvert g_i \rvert \geq 0}$. If $\lin_{4}({\dt}) \geq 0$, this upper bound is realized as~${s_i \geq 0}$. Otherwise, the maximum is attained for~${s_i = 0}$. We thus obtain for all~${i \in \overline{1:d}}$ that
    \begin{align*}
        \kappa_i &\leq {\ett} \gamma'(\xi) \left(\tfrac{\lin_4^2(\dst)\mathrm{max}\{0, \mathrm{sgn}(\lin_4(\dst))\}}{4c_\gamma} + {\dt} \lin_{5}({\dt})\right) \\ &\eqqcolon{\ett}\gamma'(\xi) \hat{\Gamma}_1({\dt}),
    \end{align*}
    where we used that, trivially,~${-\ettwo < 0}$.
    Moreover, by using that~${s_i \geq 0}$,~${\snext_i \geq 0}$ and~${\varepsilon > 0}$, as well as~${\snext_i \geq  \beta g_i^2}$ from~\eqref{sys:s}, we get
    \begin{align}
        \label{upper_bound_positive_kappa}
        \tfrac{g_{i}^2\kappa_i}{\left(\varepsilon + \sqrt{\snext_i}\right)^2(\varepsilon + \sqrt{s_i})} &\leq \tfrac{g_{i}^2{\ett}\gamma'(\xi) \hat{\Gamma}_1({\dt})}{ \varepsilon \beta g_i^2 } = \ett \gamma'(\xi)\hat{\Gamma}_2(\dst),
    \end{align}
    where~${\hat{\Gamma}_2(\dst)\coloneqq \tfrac{\hat{\Gamma}_1(\dst)}{\varepsilon\beta}}$.
    From \eqref{nu_delta} and \eqref{phi_delta} it can be directly verified that~${\hat{\Gamma}_2}$ is continuous and positive definite on~${[0,\infty)}$. By further checking  \eqref{max_ratio_1} and \eqref{max_ratio_2} it can be verified that the terms in~${\hat{\Gamma}}_2$ are either monotonically increasing  or strictly monotonically increasing on~${[0,\infty)}$. Thus, we have that~${\hat{\Gamma}_2}$ is strictly monotonically increasing on~$[0,\infty)$.
    
    With \eqref{upper_bound_positive_kappa}, we established our first upper bound. We now provide a second upper bound for~${\tfrac{g_{i}^2\kappa_i}{(\varepsilon + \sqrt{\snext_i})^2(\varepsilon + \sqrt{s_i})}}$,~${i \in \overline{1:d}}$. By rearranging the right hand side of~\eqref{proof:theorem:K:2}, we have 
    \begin{align*}
        \kappa_i &\leq -{\ett}\gamma'(\xi)\left(c_{\gamma}s_{i} + a_7 \lvert g_i \rvert + \ettwo \right) \\
        & + {\ett}\gamma'(\xi)\left(\lin_{4}({\dt}) \sqrt{s_i} + {\dt} \lin_{5}({\dt})\right).
    \end{align*}
    Multiplying both sides with $\tfrac{g_i^2}{(\varepsilon + \sqrt{\snext_i})^2(\varepsilon + \sqrt{s_i})} \geq 0$, we get
    \begin{align}
        \label{step22:1}
        \tfrac{g_{i}^2\kappa_i}{\left(\varepsilon + \sqrt{\snext_i}\right)^2(\varepsilon + \sqrt{s_i})} &\leq -\tfrac{g_{i}^2{\ett}\gamma'(\xi)\left(c_{\gamma}s_{i} + a_7 \lvert g_i \rvert + \ettwo \right)}{\left(\varepsilon + \sqrt{\snext_i}\right)^2(\varepsilon + \sqrt{s_i})} \notag \\
        & +\tfrac{g_{i}^2{\ett}\gamma'(\xi)\left(\lin_{4}({\dt}) \sqrt{s_i} + {\dt} \lin_{5}({\dt})\right)}{\left(\varepsilon + \sqrt{\snext_i}\right)^2(\varepsilon + \sqrt{s_i})}.
    \end{align}
    We now find an upper bound for the second right hand side term of \eqref{step22:1}. By using~${s_i \geq 0}$ and, from \eqref{phi_delta}, that~${\lin_5}$ is positive on~${[0,\infty)}$, 
    we get~${\tfrac{\lin_{4}({\dt}) \sqrt{s_i}}{\varepsilon + \sqrt{s_i}} \leq \mathrm{max}\big\{0, \lin_{4}(\dst)\big\}}$ and~${\tfrac{{\dt} \lin_{5}({\dt})}{\varepsilon + \sqrt{s_i}} \leq \tfrac{\dst \lin_{5}(\dst)}{\varepsilon}}$. Moreover, from \eqref{sys:s} we get~${\snext_i \geq \beta g_i^2}$ which we plug into~${\tfrac{\ett \gamma'(\xi)g_i^2}{(\varepsilon + \sqrt{\snext_i})^2}}$ along with~${\varepsilon > 0}$ to get
    \begin{align*}
        \tfrac{\ett \gamma'(\xi)g_i^2}{\left(\varepsilon + \sqrt{\snext_i}\right)^2} \leq \tfrac{\ett \gamma'(\xi)g_i^2}{\left(\varepsilon + \sqrt{\beta g_i^2}\right)^2} \leq \ett \gamma'(\xi)\tfrac{g_i^2}{\beta g_i^2}  = \ett\frac{ \gamma'(\xi)}{\beta}.
    \end{align*}
    By plugging the above inequalities into the second right hand side term of \eqref{step22:1}, we get
    \begin{align}
        \label{upper_bound_positive_kappa:2}
        \tfrac{g_{i}^2\kappa_i}{\left(\varepsilon + \sqrt{\snext_i}\right)^2(\varepsilon + \sqrt{s_i})} &\leq \ett\gamma'(\xi)\tfrac{\mathrm{max}\{0, \lin_{4}(\dst)\}}{\beta} \notag \\ &+ \dst \ett\gamma'(\xi)\tfrac{\lin_{5}(\dst)}{\varepsilon \beta} \notag \\ &  -{\ett}\gamma'(\xi)\tfrac{g_{i}^2\left(c_{\gamma}s_{i} + a_7 \lvert g_i \rvert + \ettwo\right)}{\left(\varepsilon + \sqrt{\snext_i}\right)^2(\varepsilon + \sqrt{s_i})}.
    \end{align}

    \textbf{Step 2.3:} In the following we derive an upper bound for the right hand side of \eqref{proof:delta_V_ineq} dependent only on~${\gmax}$,~${\smax}$ and~${\dst}$ by using the results from Step 2.1 and Step 2.2. 
    
    We start by analyzing the second right hand side sum of~\eqref{proof:delta_V_ineq}, and rearranging it as 
    \begin{align*}
        \sum\limits_{i=1}^{d}\tfrac{g_{i}^2\kappa_i}{\left(\varepsilon + \sqrt{\snext_i}\right)^2(\varepsilon + \sqrt{s_i})} &= \tfrac{g_{m}^2\kappa_m}{\left(\varepsilon + \sqrt{\snext_m}\right)^2(\varepsilon + \sqrt{s_m})} \\ & + \sum\limits_{\substack{j=1 \\ j \neq m}}^{d} \tfrac{g_{j}^2\kappa_j}{\left(\varepsilon + \sqrt{\snext_j}\right)^2(\varepsilon + \sqrt{s_j})},
    \end{align*}
    where~${m \in \underset{i \in \overline{1:d}}{\mathrm{argmax}} |g_i|}$. In the above equation, for each term in the sum on the right hand side we use the upper bound in \eqref{upper_bound_positive_kappa}, and for the first right hand side term (i.e., the $m$-th component) we use the upper bound in \eqref{upper_bound_positive_kappa:2}. Then, we get
    \begin{align}
        \label{step23:1}
        &\sum\limits_{i=1}^{d}\tfrac{g_{i}^2\kappa_i}{\left(\varepsilon + \sqrt{\snext_i}\right)^2(\varepsilon + \sqrt{s_i})} \leq  \ett\gamma'(\xi)\tfrac{\mathrm{max}\{0, \lin_{4}(\dst)\}}{\beta} \notag  \\ 
        +& \dst \ett\gamma'(\xi)\tfrac{\lin_{5}(\dst)}{\varepsilon \beta} - {\ett}\gamma'(\xi)\tfrac{\gmax^2\left(c_{\gamma}s_{m} + a_7  \gmax + \ettwo \right)}{\left(\varepsilon + \sqrt{\snext_m}\right)^2(\varepsilon + \sqrt{s_m})}
       \notag  \\ +&  \ett\gamma'(\xi)\Gamma(\dst),
    \end{align}
    where we used that~${|g_m| = \gmax}$ and where~${\Gamma(\dst)\coloneqq (d-1)\hat{\Gamma}_2({\dt})}$.

    Next, we analyze the first right hand side sum of \eqref{proof:delta_V_ineq}, which we remember is negative definite in~${s \in \R_{\geq 0}^d}$. It therefore trivially holds that
    \begin{align}
    \label{step23:2}
        -\beta\sum\limits_{i=1}^{d}\tfrac{s_i}{\varepsilon + \sqrt{s_i}} \leq - \beta \tfrac{\smax}{\varepsilon + \sqrt{\smax}}.
    \end{align}
    We now provide an upper bound for~${\Delta \Vv}$. By plugging \eqref{step23:1} and \eqref{step23:2} into the right hand side of~\eqref{proof:delta_V_ineq}, we get
    \begin{align}
        \label{delta_V_step_2_3:1}
        \Delta \Vv &\leq - \beta \tfrac{\smax}{\varepsilon + \sqrt{\smax}} + \nlin_1(\dst)\gamma'(\xi) \notag \\
        &- {\ett}\gamma'(\xi)\tfrac{\gmax^2\left(c_{\gamma}s_{m} + a_7  \gmax + \ettwo  \right)}{\left(\varepsilon + \sqrt{\snext_m}\right)^2(\varepsilon + \sqrt{s_m})},
    \end{align}
    where
    \begin{align}
        \label{def:p_1}
        \nlin_1(\dst) &\coloneqq \ett \Gamma(\dst) + \ett\tfrac{\mathrm{max}\{0, \lin_{4}(\dst)\}}{\beta}  + \dst\ett\tfrac{ \lin_{5}(\dst)}{\varepsilon \beta}.
    \end{align}
    We note that~${\nlin_1}$ is positive definite, continuous and strictly monotonically increasing on~${[0,\infty)}$. To obtain the desired upper bound of \eqref{proof:delta_V_ineq}, we also use that
    \begin{align*}
        \snext_m &= (1-\beta)s_m + \beta \G^2 \leq (1-\beta)\smax + \beta \G^2, 
    \end{align*}
    and that~${\ett \geq \etzero}$, due to~${u \geq 0}$, and subsequently plug both inequalities into the third right hand side term of \eqref{delta_V_step_2_3:1}. Then, we get
    \begin{align}
        \label{delta_V_midway_step23}
        \Delta \Vv &\leq -\beta \tfrac{\smax}{\varepsilon + \sqrt{\smax}}  + \nlin_1(\dst)\gamma'(\xi) \notag \\
        &  - \etzero \gamma'(\xi) \tfrac{ \G^2\left( c_\gamma s_m + a_7 \G + \ettwo\right)}{\left(\varepsilon + \sqrt{(1-\beta)\smax + \beta \G^2}\right)^2(\varepsilon + \sqrt{s_m})}.
    \end{align}

    In the following we eliminate the dependence of \eqref{delta_V_midway_step23} on~${\xi}$. We first focus on the term~${\nlin_{1}(\dst)\gamma'(\xi)}$ in \eqref{delta_V_midway_step23}. We use from \eqref{lyapunov:gamma} that~${\gamma'(\xi) = \gzero + \gone\sqrt{\xi}}$ and plug in the upper bound on~${\xi}$ from \eqref{xi_g_m_upper_bound}, as well as use the sub-additivity property of~${\sqrt{\cdot}}$. Then, 
    \begin{align}
        \label{xi_uppp}
        \gamma'(\xi) = \gzero+ \gone\sqrt{\xi} \leq \gzero + \hat{\gamma}_1\gmax + \ett\hat{\gamma}_2,
    \end{align}
    with~${\hat{\gamma}_1\coloneqq \tfrac{\gone\sqrt{d}}{\sqrt{2\mu}}}$ and~${\hat{\gamma}_2\coloneqq\tfrac{\gone \sqrt{c_L}}{\sqrt{2}}}$.
    
    Next, we focus on~${\gamma'(\xi)}$ in the third right hand side term of \eqref{delta_V_midway_step23} and use the lower bound on~${\xi}$ from \eqref{xi_g_m_lower_bound}. Then, 
    \begin{align}
        \label{def:rho_1}
        \gamma'(\xi) &\geq \rho_1(\gmax, \dst) \notag \\ &\coloneqq \gzero + \gone\sqrt{\mathrm{max}\big\{0, \tfrac{1}{4L}\gmax^2 - \ett^2c_L\big\}}.
    \end{align}
    Thereafter, by plugging \eqref{xi_uppp} and \eqref{def:rho_1} into the second and third right hand side terms of \eqref{delta_V_midway_step23}, respectively, we get
    \begin{align}
        \label{delta_V_midway_step23:2}
        \Delta \Vv &\leq -\beta \tfrac{\smax}{\varepsilon + \sqrt{\smax}} \notag \\& + \nlin_1(\dst)\left(\gzero + \hat{\gamma}_1\gmax + \ett\hat{\gamma}_2\right) \notag \\
        &  -  \etzero\tfrac{ \rho_1(\gmax, \dst) \G^2\left( c_\gamma s_m + a_7 \G + \ettwo \right)}{\left(\varepsilon + \sqrt{(1-\beta)\smax + \beta \G^2}\right)^2(\varepsilon + \sqrt{s_m})}.
    \end{align}

    Finally, we leave it to the reader to verify that
    \begin{align}
    \label{case:2:inequality}
        -\tfrac{c_\gamma s_m + a_7 \G + \ettwo}{\varepsilon + \sqrt{s_m}} &\leq \underset{s_m \in \R_{\geq 0}}{\mathrm{max}}\left(-\tfrac{c_\gamma s_m + a_7  \G + \ettwo}{\varepsilon + \sqrt{s_m}}\right)
        \notag \\ &= -2c_{\gamma}\left(\sqrt{\varepsilon^2 + \tfrac{a_7}{c_{\gamma}}\gmax + \tfrac{\ettwo}{c_\gamma}} - \varepsilon\right) \notag \\ &\eqqcolon-\rho_2(\gmax),
    \end{align}   
    where the right hand side's maximum is attained at~${s_m = \left(\sqrt{\varepsilon^2 + \tfrac{a_{7}}{c_{\gamma}} \gmax + \tfrac{\ettwo}{c_\gamma}} - \varepsilon\right)^2}$. Then, for \eqref{delta_V_midway_step23:2} we get the desired upper bound
    \begin{align}
        \label{delta_V_ISS_ineq}
        \Delta \Vv &\leq -\beta \tfrac{\smax}{\varepsilon + \sqrt{\smax}} + \nlin_2(\dst) + \nlin_3(\dst)\gmax \notag \\
        &  -  \etzero\tfrac{ \rho_1(\gmax, \dst) \G^2\rho_2(\gmax)}{\left(\varepsilon + \sqrt{(1-\beta)\smax + \beta \G^2}\right)^2},
    \end{align}
    where
    \begin{subequations}
    \label{nlin_2_3}
    \begin{align}
        \nlin_2(\dst)&\coloneqq \nlin_1(\dst)\left(\gzero + \ett\hat{\gamma}_2\right), \\
        \nlin_3(\dst)&\coloneqq \nlin_1(\dst)\hat{\gamma}_1.
    \end{align}
    \end{subequations}

    \textbf{Part 3:} In the following, we analyze the right hand side of~\eqref{delta_V_ISS_ineq} for a bipartition of the state space, defined as
    \begin{subequations}
    \label{partition}
        \begin{align}
            S_1 &\coloneqq \{(x,s) \in \R^d \times \R^{d}_{\geq 0} : \smax \geq (L {\dmax})^{2+2q}\}, \\
            S_2 &\coloneqq \{(x,s) \in \R^d \times \R^{d}_{\geq 0} : \smax < (L {\dmax})^{2+2q}\},
        \end{align}
    \end{subequations}
    with~${q \in \left(0, \tfrac{1}{4}\right)}$ and~${y \coloneqq x - x^{*}}$,
    and where ${S_1 \cup S_2 = \R^d \times \R^{d}_{\geq 0}}$.
    For each region we find upper bounds for~${\Delta \Vv}$, from which we further infer that for all~${(x,s) \in \R^d \times \R_{\geq 0}^{d}}$ and any~${\dst \in \R_{\geq 0}}$, we have
    \begin{align}
    \label{ISS:ideal:last}
        \Delta \Vv \leq -\alpha_{\Vv}(\lVert (y, s) \rVert_{\infty}) + \chi_{\Vv}(\dst),
    \end{align}
    where $\alpha_{\Vv}, \chi_{\Vv} \in \mathcal{K}_{\infty}$.

    \textbf{Step 3.1: Region $S_1$:} 
    Let ${(x,s) \in S_1}$ as defined in \eqref{partition}. Starting from \eqref{delta_V_ISS_ineq}, in the following we derive an upper bound for~${\Delta \Vv}$ for all~${(x,s) \in S_1}$. 
    
    Under \Cref{assump} \ref{assump:L_smooth}, by using norm inequalities one can show that~${L {\dmax} \geq \G}$. Then, for all~${(x,s) \in S_1}$ we have~${\smax \geq \left(L {\dmax}\right)^{2 + 2q} \geq \G^{2+2q} }$. Next, we split the first right hand side term of \eqref{delta_V_ISS_ineq} as
    \begin{align*}
        -\tfrac{\smax}{\varepsilon + \sqrt{\smax}} = -\sum\limits_{j=1}^{3} \tfrac{1}{3} \tfrac{\smax}{\varepsilon + \sqrt{\smax}},
    \end{align*}
    and observe that the right hand side functions are indeed monotonically decreasing on~$[0,\infty)$. By using~${\smax \geq \G^{2 + 2q}}$, we get 
    \begin{align*}
        -\tfrac{1}{3} \tfrac{\smax}{\varepsilon + \sqrt{\smax}} \leq -\tfrac{1}{3} \tfrac{\gmax^{2+2q}}{\varepsilon + \gmax^{1+q}},
    \end{align*}
    and moreover,
    \begin{align}
        \label{case:1:0}
        -\tfrac{\smax}{\varepsilon + \sqrt{\smax}} \leq -\tfrac{1}{3} \tfrac{\smax}{\varepsilon + \sqrt{\smax}} -\tfrac{2}{3} \tfrac{\gmax^{2+2q}}{\varepsilon + \gmax^{1+q}}.
    \end{align}
    Now, observe that the last right hand side term of \eqref{delta_V_ISS_ineq} is negative definite, and thereby upper bounded by zero. Further, we plug \eqref{case:1:0} into the first right hand side term of \eqref{delta_V_ISS_ineq}, and thus get
    \begin{align}
        \label{case_1:1}
        \Delta \Vv &\leq - \tfrac{\beta}{3}\tfrac{\smax}{\varepsilon + \sqrt{\smax}}-\tfrac{2\beta}{3}\tfrac{\G^{2 + 2q}}{\varepsilon + \G^{1+q}}  \notag \\ & + \nlin_2(\dst) + \nlin_3(\dst)\gmax.
    \end{align}
    
    Next, we focus on~${\nlin_3(\dst)\gmax}$. Consider the expression
    \begin{align}
    \label{case_1:vanishing ineq}
        \rho_3(\gmax,\dst)\coloneqq -\tfrac{\beta}{3}\tfrac{\G^{2 + 2q}}{\varepsilon + \G^{1+q}} + \nlin_3(\dst)\G.
    \end{align}
    Then, \eqref{case_1:1} takes the form
    \begin{align}
        \label{case_1:1:secundum}
        \Delta \Vv &\leq - \tfrac{\beta}{3}\tfrac{\smax}{\varepsilon + \sqrt{\smax}}-\tfrac{\beta}{3}\tfrac{\G^{2 + 2q}}{\varepsilon + \G^{1+q}}  \notag \\ & + \nlin_2(\dst) + \rho_3(\gmax,\dst).
    \end{align}
    The negative term in \eqref{case_1:vanishing ineq} grows at a rate of~${\G^{1+q}}$, which dominates the term linear in~$\G$ if~$\gmax$ is sufficiently large with respect to~$\dst$. 
    We now show that one can find some~${v_{1}(\dst) \geq 0}$ such that for all~${ \G \geq v_{1}(\dst)}$ it holds that~${\rho_3(\gmax, \dst) \leq 0}$, 
    and  for all~${\G \geq 0}$ that~${\rho_3(\gmax,\dst) \leq \nlin_3(\dst)v_{1}(\dst)}$. We set
    \begin{align}
        \label{v_1}
        v_1(\dst)\coloneqq\mathrm{max} \left\{\varepsilon^{\tfrac{1}{1+q}}, \left(\tfrac{6 \nlin_3(\dst) }{\beta}\right)^{\tfrac{1}{q}}\right\},
    \end{align}
    which is positive and continuous on~$[0,\infty)$.
    
    In the following, we show that for all~${\gmax \geq v_1(\dst)}$ it holds that~${\rho_3(\gmax,\dst) \leq 0}$. As we assume that~${\gmax \geq v_1(\dst)}$, from \eqref{v_1} we further get~${ \varepsilon \leq \gmax^{1+q}}$. By plugging this inequality into the denominator of the negative definite term on the right hand side of~\eqref{case_1:vanishing ineq}, we get
    \begin{align*}
        -\tfrac{\beta}{3} \tfrac{\gmax^{2+2q}}{\varepsilon + \gmax^{1+q}} \leq -\tfrac{\beta}{6} \gmax^{1+q}.
    \end{align*}
    By plugging the obtained inequality into the right hand side of~\eqref{case_1:vanishing ineq}, we get
    \begin{align}
        \label{case:1:pre_vanishing_final}
        \rho_3(\gmax,\dst) \leq -\tfrac{\beta}{6} \gmax^{1+q} + \nlin_3(\dst)\gmax.
    \end{align}
    Since by assumption we have~${\gmax \geq v_1(\dst)}$, from \eqref{v_1} we get~${\gmax \geq \left(\tfrac{6\nlin_3(\dst)}{\beta}\right)^{\tfrac{1}{q}}}$, i.e.,~${\nlin_3(\dst) \leq \left(\frac{\beta}{6}\gmax\right)^q}$. By plugging this inequality into the right hand side of \eqref{case:1:pre_vanishing_final}, we get
    \begin{align}
        \label{rho_3_ineq}
        \rho_3(\gmax,\dst) \leq 0.
    \end{align}
    
    Finally, we show for all~${\G \geq 0}$ that~${\rho_3(\gmax,\dst) \leq \nlin_3(\dst)v_{1}(\dst)}$. Remember that~${\rho_3(\cdot, u)}$ is continuous on~${[0, v_1(u))}$, with the first term in \eqref{case_1:vanishing ineq} being negative definite, and thus upper bounded by zero, and the second term monotonically increasing. Therefore, for~${\gmax \in [0, v_1(u))}$ we get that
    \begin{align*}
        \rho_3(\gmax, \dst) \leq \nlin_3(\dst)v_1(\dst),
    \end{align*}
    and, trivially, due to the result obtained for all~${\gmax \geq v_1(\dst)}$ in \eqref{rho_3_ineq}, we get for all~${\gmax \geq 0}$ that
    \begin{align*}
        \rho_3(\gmax, \dst) \leq \mathrm{max}\big\{\nlin_3(\dst)v_1(\dst),0\big\}=\nlin_3(\dst)v_1(\dst),
    \end{align*}
    which we then plug into the right hand side of \eqref{case_1:1:secundum} to get
    \begin{align}
    \label{case:1:0:2}
        \Delta \Vv &\leq - \tfrac{\beta}{3}\tfrac{\smax}{\varepsilon + \sqrt{\smax}}-\tfrac{\beta}{3}\tfrac{\G^{2 + 2q}}{\varepsilon + \G^{1+q}}  \notag \\ & + \nlin_2(\dst) + \nlin_3(\dst)v_1(\dst).
    \end{align}

    It only remains to provide an upper bound for~\eqref{case:1:0:2} dependent on~${\xmax}$,~${\smax}$ and~$\dst$. We use here that~${-\tfrac{\beta}{3}\tfrac{\gmax^{2+2q}}{\varepsilon + \gmax^{1+q}}}$ is monotonically decreasing in~${\gmax}$. Under \Cref{assump}~\ref{assump:convex}, by using \cite[Theorem 2.1.10, (2.1.26)]{nesterov2013introductory} and norm inequalities, we have that the inequality~${{\gmax}^2 \geq \tfrac{\mu^2 \xmax^2}{d}}$ applies, thereby obtaining
    \begin{align*}
        -\tfrac{\beta}{3}\tfrac{\gmax^{2+2q}}{\varepsilon + \gmax^{1+q}} \leq -\tfrac{\beta}{3}\tfrac{\hat{r}_1^{2+2q}\xmax^{2 + 2q}}{\varepsilon + \hat{r}_1^{1+q}\xmax^{1+q}},
    \end{align*}
    where~${\hat{r}_1\coloneqq \tfrac{\mu}{\sqrt{d}}}$.
    By plugging the above inequality into the right hand side of \eqref{case:1:0:2}, we get 
    \begin{align}
        \label{case_1:1:2}
        \Delta \Vv &\leq - \tfrac{\beta}{3}\tfrac{\smax}{\varepsilon + \sqrt{\smax}} \notag \\ &-\tfrac{\beta}{3}\tfrac{\hat{r}_1^{2+2q}\xmax^{2 + 2q}}{\varepsilon + \hat{r}_1^{1+q}\xmax^{1+q}}  \notag \\ &+ \nlin_2(\dst) + \nlin_3(\dst)v_{1}(\dst) \notag \\
         &\eqqcolon -\psi_{11}(\smax) - \psi_{12}(\xmax) + \chi_{1}(\dst).
    \end{align}
    
    \textbf{Step 3.2: Region $S_2$:}  Let ${(x,s) \in S_2}$, with~$S_2$ as defined in \eqref{partition}. Starting from~\eqref{delta_V_ISS_ineq}, in the following we derive an upper bound for~${\Delta \Vv}$ for all~${(x,s) \in S_2}$. 
    
    First, observe for all~${(x,s) \in S_2}$ that~${\smax < (L {\dmax})^{2+2q}}$. Second, under \Cref{assump} \ref{assump:convex} with~${\mu > 0}$, by using \cite[Theorem 2.1.10, (2.1.26)]{nesterov2013introductory} and norm inequalities, it follows that~${\G^2 \geq \hat{r}_1^2 \dmax}^2$ holds, with~${\hat{r}_1= \tfrac{\mu}{\sqrt{d}}}$. Combining these two inequalities gives us~${\smax < \hat{r}_2 \G^{2+ 2q}}$ for all~${(x,s) \in S_2}$, where~${\hat{r}_2\coloneqq\left(\tfrac{L^2 d}{\mu^2}\right)^{1 + q}}$, which we plug into the denominator of the last right hand side term of  \eqref{delta_V_ISS_ineq} to get
    \begin{align}
        \label{case:2}
        \Delta \Vv &\leq -\beta \tfrac{\smax}{\varepsilon + \sqrt{\smax}} + \nlin_2(\dst) + \nlin_3(\dst)\gmax \notag \\ &- 2\tfrac{\etzero}{2}\tfrac{  \rho_1(\G, \dst) \G^{2} \rho_2(\gmax)}{\left(\varepsilon + \sqrt{\hat{r}_3\G^{2+ 2q} + \beta \G^2}\right)^2},
    \end{align}
    where~${\hat{r}_3\coloneqq (1-\beta)\hat{r}_2}$.
    
    Next, we focus on~${\nlin_3(\dst)\gmax}$. Consider the expression
    \begin{align}
    \label{case_2:vanishing ineq}
        \rho_4(\gmax, \dst) \hspace{-0.025cm}&\coloneqq \hspace{-0.075cm}- \tfrac{\etzero}{2}\tfrac{  \rho_1(\G, \dst) \G^{2} \rho_2(\gmax)}{\left(\varepsilon + \sqrt{\hat{r}_3\G^{2+ 2q} + \beta \G^2}\right)^2} \hspace{-0.075cm} + \hspace{-0.075cm}\nlin_3(\dst)\gmax.
    \end{align}
    Then, \eqref{case:2} takes the form
    \begin{align}
        \label{case:2:secundum}
        \Delta \Vv &\leq -\beta \tfrac{\smax}{\varepsilon + \sqrt{\smax}} + \nlin_2(\dst) + \rho_4(\gmax, \dst) \notag \\ &- \tfrac{\etzero}{2}\tfrac{  \rho_1(\G, \dst) \G^{2} \rho_2(\gmax)}{\left(\varepsilon + \sqrt{\hat{r}_3\G^{2+ 2q} + \beta \G^2}\right)^2},
    \end{align}
    The negative definite term on the right hand side of~\eqref{case_2:vanishing ineq}, for~${\gmax}$ sufficiently large with respect to~$\dst$, grows with a rate of~${\G^{1 + 2 + \tfrac{1}{2} - 2 - 2q } = \gmax^{1 + \tfrac{1}{2} - 2q}}$. The reader can verify this by direct analysis of~\eqref{def:rho_1} and~\eqref{case:2:inequality}. As~${2q < \tfrac{1}{2}}$ holds due to the choice of~$q$ in \eqref{partition}, this implies that $\gmax^{1 + \tfrac{1}{2} - 2q}$ grows with a rate greater than~$\G$ and thus, for a sufficiently large~$\G$ dominates the term linear in~$\G$.  Similarly to the analysis done for Region $S_1$, we now show that there exists some~${v_2(\dst) \geq 0}$ such that for all~${\G\geq  v_2(\dst)}$ it holds that~${\rho_4(\gmax,\dst) \leq 0}$, 
    and moreover, for all~${\G \geq 0}$ that~${\rho_4(\gmax,\dst) \leq  \nlin_3(\dst)v_2(\dst)}$. We set
    \begin{align}
        \label{v_2}
        v_2(\dst) &\coloneqq \mathrm{max}\Bigg\{\ett 2\sqrt{2Lc_L}, \tfrac{\varepsilon^2 c_{\gamma}}{a_7}, \left(\tfrac{\varepsilon + \epsilon_2}{\sqrt{\hat{r}_3 + \epsilon_1 }}\right)^{\tfrac{1}{1+q}}, \notag \\
        & \left(\nlin_3(\dst)\tfrac{8(\sqrt{2}+1)\sqrt{2L}(\hat{r}_3 + \epsilon_1)}{\etzero\gone\sqrt{a_7c_\gamma}}\right)^{\tfrac{2}{1-4q}}\Bigg\},
    \end{align}
    which is positive and continuous on~$[0,\infty)$, and where~${\epsilon_1}$ and~${\epsilon_2}$ are positive constants such that for all~${\gmax \geq 0}$
    \begin{align}
        \label{epsilon_1_epsilon_2}
        \beta \gmax^2 \leq \epsilon_1 \gmax^{2+2q} + \epsilon_2^2.
    \end{align}
    One such pair of constants are, for example,~${\epsilon_1 = \beta}$ and~${\epsilon_2 = \sqrt{q \beta (1+q)^{-\frac{q+1}{q}}}}$.

    In the following, we show that for all~${\gmax \geq v_2(\dst)}$ we have~${\rho_4(\gmax,\dst) \leq 0}$. 
    We start by lower bounding~${\rho_1}$, defined in \eqref{def:rho_1}. Since \eqref{gzero_gbar} implies~${\gzero > 0}$, we get 
    \begin{align}
    \label{step_2_4_reg_2_0}
        \rho_1(\gmax,\dst) &> \gamma_1\sqrt{\mathrm{max}\Big\{0, \tfrac{1}{4L}\gmax^2 - \ett^2c_L\Big\}}.
    \end{align}
    Now, by assumption we have~${\gmax\geq v_2(\dst)}$.  From~\eqref{v_2} it then follows that~${\gmax \geq \ett 2\sqrt{2Lc_L}}$, from where we get~${\ett^2 c_L \leq \tfrac{\gmax^2}{8L}}$. By plugging this inequality into the right hand side of \eqref{step_2_4_reg_2_0}, we get
    \begin{align}
        \label{step_2_4_reg_2_1}
        \rho_1(\gmax,\dst) > \gamma_1\sqrt{\tfrac{\gmax^2}{4L} - \tfrac{\gmax^2}{8L}} = \tfrac{\gamma_1}{2\sqrt{2L}}\gmax.
    \end{align}
    
    Next, we upper bound the denominator in the first term on the right hand side of~\eqref{case_2:vanishing ineq}. By plugging~\eqref{epsilon_1_epsilon_2} into the said denominator, we get
    \begin{align*}
        \sqrt{\hat{r}_3 \gmax^{2+2q} + \beta \gmax^2} \leq \sqrt{(\hat{r}_3 + \epsilon_1) \gmax^{2+2q} + \epsilon_2^2}.
    \end{align*}
    By using the sub-additivity property of~$\sqrt{\cdot}$, we further get
    \begin{align}
        \label{denominator:1}
        \sqrt{(\hat{r}_3 + \epsilon_1) \gmax^{2+2q} + \epsilon_2^2} &\leq \sqrt{\hat{r}_3 + \epsilon_1}\gmax^{1+q} + \epsilon_2.
    \end{align}
    Next, by assumption, we have~${\gmax\geq v_2(\dst)}$, and from \eqref{v_2} we further get~${\gmax \geq \left(\tfrac{\varepsilon + \epsilon_2}{\sqrt{\hat{r}_3 + \epsilon_1 }}\right)^{\tfrac{1}{1+q}}}$. Then, we obtain that
    \begin{align}
        \label{denominator:2}
        \varepsilon + \epsilon_2 \leq \sqrt{\hat{r}_3 + \epsilon_1}\gmax^{1+q}.
    \end{align}
    By using \eqref{denominator:1} and \eqref{denominator:2}, for the denominator of the first term on the right hand side of~\eqref{case_2:vanishing ineq} we get
    \begin{align}
    \label{step_2_4_reg_2_2}
        &\left(\varepsilon + \sqrt{\hat{r}_3 \gmax^{2+2q} + \beta \gmax^2}\right)^2 \notag
        \\ \leq &\left(\varepsilon + \epsilon_2 + \sqrt{\hat{r}_3 + \epsilon_1 }\gmax^{1+q}\right)^2 \leq 4\left(\hat{r}_3 + \epsilon_1\right)\gmax^{2+2q}.
    \end{align}

    Further, we lower bound~${\rho_2}$, defined in \eqref{case:2:inequality}. We first observe that the inequality
    \begin{align*}
        \rho_2(\gmax) \geq 2c_\gamma\left(\sqrt{\varepsilon^2 + \tfrac{a_7}{c_\gamma}\gmax} - \varepsilon\right)
    \end{align*}
    holds since~${\frac{\ettwo}{c_\gamma} > 0}$, and subsequently use that~${a-b = \tfrac{a^2 - b^2}{a+b}}$ to get
    \begin{align}
        \label{partition:2:rho_2}
        \rho_2(\gmax) \geq \tfrac{ 2a_7\gmax }{\sqrt{\varepsilon^2 + \tfrac{a_7}{c_\gamma}\gmax} + \varepsilon}.
    \end{align}
    By assumption, we have~${\gmax\geq v_2(\dst)}$, with~$v_2(\dst)$ defined in \eqref{v_2}, which implies~${\gmax \geq \tfrac{\varepsilon^2 c_\gamma}{a_7}}$. Thus, we get~${\varepsilon^2 \leq \tfrac{a_7 \gmax}{c_{\gamma}}}$, which we plug into the right hand side of~\eqref{partition:2:rho_2} to get 
    \begin{align}
    \label{step_2_4_reg_2_3}
        \rho_2(\gmax) \geq  \tfrac{2a_7\gmax}{(\sqrt{2}+1)\sqrt{\tfrac{a_7}{c_\gamma} \gmax}} 
        = \tfrac{2\sqrt{a_7 c_\gamma}}{\sqrt{2}+1}\sqrt{\gmax}.
    \end{align}

    Finally, by plugging in \eqref{step_2_4_reg_2_1}, \eqref{step_2_4_reg_2_2} and \eqref{step_2_4_reg_2_3} into the right hand side of \eqref{case_2:vanishing ineq} and by using elementary algebraic inequalities, we get
    \begin{align*}
        &\rho_4(\gmax,\dst) \notag \\ \leq& \gmax\left(-\frac{\etzero\gone\sqrt{a_7c_\gamma}\gmax^{\tfrac{1}{2} - 2q}}{8(\sqrt{2}+1)\sqrt{2L}(\hat{r}_3 + \epsilon_1)} + \nlin_3(\dst)\right).
    \end{align*}
    By assumption, we have~${\gmax\geq v_2(\dst)}$, with~$v_2(\dst)$ defined in \eqref{v_2}, which implies~${\gmax \geq \left(\nlin_3(\dst)\tfrac{8(\sqrt{2}+1)\sqrt{2L}(\hat{r}_3 + \epsilon_1)}{\etzero\gone\sqrt{a_7c_\gamma}}\right)^{\tfrac{2}{1-4q}}}$. By plugging this into the above inequality, we get
    \begin{align}
        \label{rho_4_ineq}
        \rho_4(\gmax,\dst) \leq 0.
    \end{align}

    Finally, we show for all~${\G \geq 0}$ that~${\rho_4(\gmax,\dst) \leq \nlin_3(\dst)v_{2}(\dst)}$. Remember that~${\rho_4(\cdot, u)}$ is continuous on~${[0, v_2(u))}$, with the first right hand side term of \eqref{case_1:vanishing ineq} being negative definite, and thus upper bounded by zero, and the second term is monotonically increasing. Therefore, for~${\gmax \in [0, v_2(u))}$ we get that
    \begin{align*}
        \rho_4(\gmax, \dst) \leq \nlin_3(\dst)v_2(\dst),
    \end{align*}
    and trivially, due to the result obtained for all~${\gmax \geq v_2(\dst)}$ in \eqref{rho_4_ineq}, for all~${\gmax \geq 0}$ we get
    \begin{align*}
        \rho_4(\gmax, \dst) \leq \mathrm{max}\big\{\nlin_3(\dst)v_2(\dst),0\big\}=\nlin_3(\dst)v_2(\dst).
    \end{align*}
    By plugging this inequality into the right hand side of \eqref{case:2:secundum}, we get
    \begin{align}
        \label{case:2:1}
        \Delta \Vv &\leq -\beta \tfrac{\smax}{\varepsilon + \sqrt{\smax}} + \nlin_2(\dst) + \nlin_3(\dst)v_2(\dst) \notag \\ &- \tfrac{\etzero}{2}\tfrac{  \rho_1(\G, \dst) \G^{2} \rho_2(\gmax)}{\left(\varepsilon + \sqrt{\hat{r}_3\G^{2+ 2q} + \beta \G^2}\right)^2}.
    \end{align}
    
    It only remains to provide an upper bound for~\eqref{case:2:1} dependent on~${\xmax}$,~${\smax}$ and~$\dst$. We first observe from~\eqref{gzero_gbar} and~\eqref{def:rho_1} that~${\rho_1(\gmax, \dst) \geq \gzero > 0}$ for all~${\gmax \geq 0}$ and~${\dst \in \R_{\geq 0}}$. Second, we observe that~${\rho_2}$, defined in~\eqref{case:2:inequality}, is a monotonically increasing function on~$[0,\infty)$. We use the inequality~${ \tfrac{\mu^2 \xmax^2}{d} \leq {\gmax}^2}$, which holds due to \Cref{assump}~\ref{assump:convex} with~${\mu > 0}$. Then, for the numerator of the last term of \eqref{case:2:1} we get
    \begin{align}
        \label{case:2:1:1}
          \rho_1(\G, \dst) \G^{2} \rho_2(\gmax) \geq  \gzero \hat{r}_1^2\xmax^{2} \rho_2\left(\hat{r}_1\xmax\right),
    \end{align}
    where~${\hat{r}_1 = \tfrac{\mu}{\sqrt{d}}}$.
    We next use that~${\gmax^2 \leq L^2d \xmax^2}$ which holds under \Cref{assump}~\ref{assump:L_smooth}, and plug it in the denominator of the last right hand side term of \eqref{case:2:1} to get
    \begin{align}
        \label{case:2:1:2}
        \hat{r}_3\G^{2+ 2q} + \beta \G^2 &\leq \hat{r}_4\xmax^{2+2q} +\hat{r}_5\xmax^2,
    \end{align}
    where~${\hat{r}_4\coloneqq \hat{r}_3(L^2d)^{1+q}}$ and~${\hat{r}_5\coloneqq \beta L^2d}$. 
    By plugging \eqref{case:2:1:1} and \eqref{case:2:1:2} into the last term of \eqref{case:2:1}, we get
    \begin{align}
        \label{case:2:2}
        \Delta \Vv &\leq - \beta\tfrac{\smax}{\varepsilon + \sqrt{\smax}} \notag \\ &- \tfrac{\etzero\gzero\hat{r}_1^2}{2}\tfrac{   \xmax^{2} \rho_2\left(\hat{r}_1\xmax\right)}{\left(\varepsilon + \sqrt{\hat{r}_4\xmax^{2+2q} +\hat{r}_5\xmax^2}\right)^2} \notag \\ & + \nlin_2(\dst) + \nlin_3(\dst)v_2(\dst) \notag \\
         &\eqqcolon -\psi_{21}(\smax) - \psi_{22}(\xmax) + \chi_{2}(\dst).
    \end{align}

    \textbf{Step 3.3: Full State Space $S_1 \cup S_2$:} 
    In the following, we prove the claim of \Cref{ISS-theorem}. First, we show global asymptotic stability of algorithm \eqref{sys:rmsprop} according to \cite[Theorem 3.4.6.]{michel2008stability} for~${u = 0}$ and with respect to the equilibrium~${(x^{*},0)}$. Second, we show ISS of algorithm \eqref{sys:rmsprop}  for any~${\dst(\cdot) \in \Uu}$
    with respect to the equilibrium~$(x^{*},0)$ according to \cite[Definition 3.2.]{jiang2001iss_discrete}, adapted for the considered setup, by analyzing~${\Delta \Vv}$ for any~${\dst \in \R_{\geq 0}}$.

    We start by finding an upper bound for~${\Delta \Vv}$ for all~${(x,s) \in \rrd}$ based on the the upper bounds for~${\Delta \Vv}$ from steps 3.1 and 3.2. Observe that, for~${(x,s) \in S_j}$, ${j \in \{1,2\}}$, as given in \eqref{case_1:1:2} and \eqref{case:2:2}, we get an upper bound of the type
    \begin{align*}
        \Delta \Vv \leq -\psi_{j1}(\smax) - \psi_{j2}(\xmax) + \chi_j(\dst).
    \end{align*}

   It can be verified that the functions~${\psi_{11}}$,~${\psi_{12}}$ and~${\psi_{21}}$ are continuous, positive definite, strictly monotonically increasing and radially unbounded on~${[0, \infty)}$, thus, they belong to class~${\mathcal{K}_{\infty}}$. Moreover, it can be verified that~${\psi_{22}}$ is continuous, positive definite and radially unbounded on~${[0, \infty)}$. The latter holds true as the numerator in~${\psi_{22}(\xmax)}$ grows at a rate of~${\xmax^{\tfrac{5}{2}}}$ and the denominator at a rate of~${\xmax^{2+2q}}$ with~${q \in \left(0, \tfrac{1}{4}\right)}$. Then, from \cite[Lemma 4.3]{khalil2002nonlinear} it follows that there exists some~${\psi_{23} \in \mathcal{K}_{\infty}}$, such that for all~${z \in \R_{\geq 0}}$ it holds that~${\psi_{23}(z) \leq \psi_{22}(z)}$. We remark here that the proof of~\cite[Lemma 4.3]{khalil2002nonlinear} applies directly for~${D = \R_{\geq 0}}$ by substituting~${\|x\|}$ with~${x}$. One such candidate is
   \begin{align*}
       \psi_{23}(z): = \begin{cases}
           - \tfrac{\etzero\gzero\hat{r}_1^2c_\gamma}{4\varepsilon^2} z^2 \left(\sqrt{\varepsilon^2 + \tfrac{a_7}{c_\gamma}\hat{r}_1 z} -  \varepsilon\right), & z \leq z_{*} \\
           - \tfrac{\etzero\gzero\hat{r}_1^2c_\gamma}{4} \tfrac{\sqrt{\varepsilon^2 + \tfrac{a_7}{c_\gamma}\hat{r}_1 z} -  \varepsilon}{\hat{r}_4z^{2q} + \hat{r}_5}, & z > z_{*}
       \end{cases} ,
   \end{align*}
    where~${z_{*} \in \R_{\geq 0}}$ is the unique real root of~${\varepsilon = z\sqrt{\hat{r}_4 z^{2q} + \hat{r}_5}}$. The lower bound~${\psi_{23}}$ is obtained by analyzing the denominator of~${\psi_{22}}$. In particular, for any~${z \geq 0}$ such that~${\varepsilon \geq z\sqrt{\hat{r}_4z^{2q} + \hat{r}_5}}$, we upper bound the denominator by~${4\varepsilon^2}$, while for any~${z \geq 0}$ such that~${\varepsilon < z\sqrt{\hat{r}_4z^{2q} + \hat{r}_5}}$, we upper bound the denominator by~${4z^2(\hat{r}_4z^{2q} + \hat{r}_5)}$.

    We now use that, for any~${\psi_3, \psi_4: \R_{\geq 0} \to \R_{\geq 0}}$ and any~${a,b \in \R_{\geq 0}}$, it holds that
    \begin{align*}
        \psi_3(a) + \psi_4(b) \geq \mathrm{min}\left(\psi_3(\|(a,b)\|_{\infty}), \psi_4(\|(a,b)\|_{\infty})\right).
    \end{align*}
    This property can be shown by a simple case analysis and is left for the reader to check.
    Then, we get that
    \begin{align*}
        \psi_{11}(a) + \psi_{12}(b) &\geq \mathrm{min}\left(\psi_{11}(\|(a,b)\|_{\infty}), \psi_{12}(\|(a,b)\|_{\infty})\right) \\
        &\eqqcolon\alpha_{1}(\|(a,b)\|_{\infty}),\\
        \psi_{21}(a) + \psi_{22}(b) &\geq \psi_{21}(a) + \psi_{23}(b)\\
        &\geq \mathrm{min}\left(\psi_{21}(\|(a,b)\|_{\infty}), \psi_{23}(\|(a,b)\|_{\infty})\right) \\
        &\eqqcolon \alpha_2(\|(a,b)\|_{\infty}).
    \end{align*}
    Substituting~${a = \smax}$ and~${b = \xmax}$, and observing that~${\|(\xmax, \smax) \|_{\infty} = \| (y,s) \|_{\infty}}$, we get for all~${(x,s) \in S_j}$, ${j \in \{1,2\}}$ that
    \begin{align*}
        \Delta \Vv &\leq   -\alpha_j(\| (y,s) \|_{\infty}) +  \chi_j(\dst),
    \end{align*}
    where~${\alpha_j \in \mathcal{K}_{\infty}}$,~${j \in \{1,2\}}$, as the minimum of~${\mathcal{K}_{\infty}}$-functions belongs to class~${\mathcal{K}_{\infty}}$. Further, for all~${(x,s) \in \rrd }$ it holds that
    \begin{align}
        \label{DVV}
        \Delta \Vv &\leq  -\underset{i \in \{1,2\}}{\mathrm{min}} \hspace{0.1cm}\alpha_i(\| (y,s) \|_{\infty}) +  \underset{j \in \{1,2\}}{\mathrm{max}}\chi_j(\dst) \notag \\
        & \eqqcolon -\alpha_{\Vv}(\| (y,s) \|_{\infty}) +  \chi_{\Vv}(\dst),
    \end{align}
     where~${\alpha_{\Vv} \in \mathcal{K}_{\infty}}$. This holds true as the minimum of~${\mathcal{K}_{\infty}}$-functions belongs to class~${\mathcal{K}_{\infty}}$. 
     For the sake of completeness, we now present the exact form of~${\alpha_{\Vv}}$. For any~${z \geq 0}$, it reads as
    \begin{align}
        \alpha_{\Vv}(z) &\coloneqq  \mathrm{min}\big\{ \psi_5(z), \tfrac{\tau_7  z }{\tau_8 + \sqrt{ z }} 
        \big\}, \\
        \psi_5(z) &\coloneqq  {\small \begin{cases}
            \tau_1 z^2 \left(\sqrt{\varepsilon^2 + \tau_2 z} -  \varepsilon\right), & z \leq z_{*}, \\
           \tau_3 \tfrac{\sqrt{\varepsilon^2 + \tau_4 z} -  \varepsilon}{\tau_5z^{2q} + \tau_6}, & z > z_{*},
           \end{cases}  } \notag
    \end{align}
    where~${z_{*} \in \R_{\geq 0}}$ is the unique real root of~${\varepsilon = z\sqrt{\tau_5 z^{2q} + \tau_6}}$ and
    \begin{subequations}
        \begin{align}
            \tau_1 &\coloneqq \tfrac{\etzero\gzero\hat{r}_1^2c_\gamma}{4\varepsilon^2}, &
            \tau_2 &\coloneqq \tfrac{a_7 \hat{r}_1}{c_\gamma}, \\
            \tau_3 &\coloneqq \tfrac{\etzero\gzero\hat{r}_1^2c_\gamma}{4}, &
            \tau_4 &\coloneqq \tfrac{a_7 \hat{r}_1}{c_\gamma}, \\
            \tau_5 &\coloneqq \hat{r}_4 & \tau_6 & \coloneqq \hat{r}_5, \\
            \tau_7 &\coloneqq \tfrac{\beta}{3}\mathrm{min}\{1, \hat{r}_1^{1+q}\}, & \tau_8 & \coloneqq \varepsilon \mathrm{max}\{1, \hat{r}_1^{-1-q}\}.
        \end{align}
    \end{subequations}
    Further, by expanding~${\chi_{\Vv}}$ by using \eqref{case_1:1:2} and \eqref{case:2:2},
    we get
    \begin{align*}
        \chi_{\Vv}(\dst) = \nlin_2(\dst) + \nlin_3(\dst)\underset{j \in \{1,2\}}{\mathrm{max}} v_j(\dst),
    \end{align*}
    with~$\nlin_2$ and~$\nlin_3$ as defined in \eqref{nlin_2_3}. It can be verified that~$\nlin_2$ and~$\nlin_3$ are continuous, positive definite, strictly monotonically increasing and radially unbounded on~$[0,\infty)$. In addition, both~$v_1$ and~$v_2$, defined in \eqref{v_1} and \eqref{v_2}, respectively, are positive, continuous and monotonically increasing on~$[0,\infty)$. Therefore,
    we have that~${\chi_{\Vv}\in \mathcal{K}_{\infty}}$.

    In the following, we establish the stability and ISS of algorithm \eqref{sys:rmsprop}. 
    First, we show global asymptotic stability of the zero-input system~(${\dst = 0}$) with respect to the equilibrium~${(x^{*},0)}$, which lies on the boundary of~${\rrd}$. We apply \cite[Theorem 3.4.6.]{michel2008stability} with~${X\coloneqq\rrd}$,~${A\coloneqq\rrd}$,~${M\coloneqq\{(x^{*},0)\}}$ and the distance metric~${d}$ induced by the infinity norm. Under this setup, \Cref{lemma:Lyapunov_fcn} satisfies condition~(3.64) in \cite[Theorem 3.4.6.]{michel2008stability}. Moreover, for~${\dst = 0}$, for \eqref{DVV} we have 
    \begin{align*}
        \Delta \Vv \leq -\alpha_{\Vv}(\| (y,s) \|_{\infty})
    \end{align*}
    as~${\chi_{\Vv}(0) = 0}$. Thus, condition~(3.65) in \cite[Theorem 3.4.6.]{michel2008stability} is satisfied, and thereby, it follows that algorithm \eqref{sys:rmsprop} is globally asymptotically stable with respect to the equilibrium~${(x^{*},0)}$.
    
    Next, we consider any~${\dst \in \R_{\geq 0}}$
    . We invoke 
    \cite[Definition 3.2.]{jiang2001iss_discrete} adapted to the state space~${\rrd}$, equilibrium~${(x^{*},0)}$ for the zero-input system, and the distance metric induced by the infinity norm. From \Cref{lemma:Lyapunov_fcn} and \eqref{DVV} it directly follows that the conditions $(5)$ and~$(6)$ in the adapted ISS definition in \cite[Definition 3.2.]{jiang2001iss_discrete} are satisfied for all~${(x,s) \in \rrd}$ and all~${\dst \in \R_{\geq 0}}$. Consequently,~${\Delta \Vv}$, defined in \eqref{proof:theorem:difference_Lyapunov}, satisfies an ISS decrease condition, and moreover,~${\Vv}$, defined in \eqref{function:Lyapunov}, is an ISS-Lyapunov function for algorithm \eqref{sys:rmsprop}. Then, by \cite[Lemma 3.5.]{jiang2001iss_discrete}, algorithm \eqref{sys:rmsprop} is ISS for every~${\dst(\cdot) \in \Uu}$ with respect to the equilibrium~${(x^{*},0)}$.
\end{proof}

We now provide some implications and discussions about the main result of this paper.

\begin{rmk}
The stability of algorithm \eqref{sys:rmsprop} can be also approached by a cascade-like argument.
Observe that the $s$-subsystem is driven by the $x$-subsystem via the input $\nabla f(x)$.
Conversely, the $x$-subsystem is driven by the $s$-subsystem and this direction of coupling has a stabilizing effect. 
Therefore, cascade-like stability arguments appear to be applicable for studying the stability of RMSProp.
In particular, for a continuous-time version of RMSProp (see, e.g., \cite{ma2022qualitative}), such an approach seems rather straightforward.
Nevertheless, the availability of an ISS-Lyapunov function is conceptually appealing and offers some benefits, for example,
when dealing with related classes of algorithms.
\end{rmk}

\begin{rmk}
\label{remark:2:trade_off}
    The presented ISS-Lyapunov function in \eqref{function:Lyapunov} can be used to discuss the potential trade-off between higher convergence rates and larger steady-state errors. In particular, the inequality \eqref{DVV} establishes ISS
    in terms of the (gain-like) functions    $\alpha_{\Vv}$ and $\chi_{\Vv}$.
    Notably,~$\alpha_\Vv$ in~\eqref{DVV} is independent of the input~$u$. However, it is possible to get sharper inequalities.
    In particular, for all~${(x,s) \in S_1}$ we have
    \begin{align}
        \label{remark:2:1}
        \Delta \Vv &\leq - \tfrac{\beta}{3}\tfrac{\smax}{\varepsilon + \sqrt{\smax}} -\tfrac{\ett}{2}\tfrac{  \gzero \hat{r}_1^2\xmax^2 \rho_2\left(\hat{r}_1\xmax\right)}{\left(\varepsilon + \sqrt{(1-\beta)\smax +\hat{r}_5\xmax^2}\right)^2} \notag  \\ &  -\tfrac{\beta}{3}\tfrac{\hat{r}_1^{2+2q}\xmax^{2 + 2q}}{\varepsilon + \hat{r}_1^{1+q}\xmax^{1+q}} + \nlin_2(\dst) + \nlin_3(\dst)v_{1}(\dst),
    \end{align}
    and for all~${(x,s) \in S_2}$ we have
    \begin{align}
        \label{remark:2:2}
        \Delta \Vv &\leq - \beta\tfrac{\smax}{\varepsilon + \sqrt{\smax}} - \tfrac{\ett}{2}\tfrac{  \gzero \hat{r}_1^2\xmax^2 \rho_2\left(\hat{r}_1\xmax\right)}{\left(\varepsilon + \sqrt{\hat{r}_4\xmax^{2+2q} +\hat{r}_5\xmax^2}\right)^2}\notag \\ & + \nlin_2(\dst) + \nlin_3(\dst)v_2(\dst),
    \end{align}
    with~$S_1$ and $S_2$ as defined in \eqref{partition}, ${S_1 \cup S_2=\R^d \times \R^d_{\ge 0}}$, and where~${y=x-x^{*}}$. We first highlight here that~${\nlin_2, \nlin_3 \in \mathcal{K}_{\infty}}$. Moreover, the second terms on the right hand side of \eqref{remark:2:1} and~\eqref{remark:2:2} are negative definite in~${y \in \R^d}$, and they are scaled by~$\ett$. Therefore, if~${\ett}$ increases, the convergence speed of algorithm~\eqref{sys:rmsprop} does not decrease and potentially could increase. 
    On the other hand, the term $\nlin_2(\dst) + \nlin_3(\dst)v_2(\dst)$ also increases and induces a larger error floor. Hence, one observes a trade-off between convergence speed and convergence accuracy. Intuitively, the convergence speed appears to increase with~$\ett$, however, a rigorous analysis of this effect is beyond  the scope of this paper and is left for future work. To illustrate this, we provide simulation results in \Cref{fig:1}.
\end{rmk}

\begin{figure}[!t]
    \centering
    \includegraphics[width=1.05\linewidth]{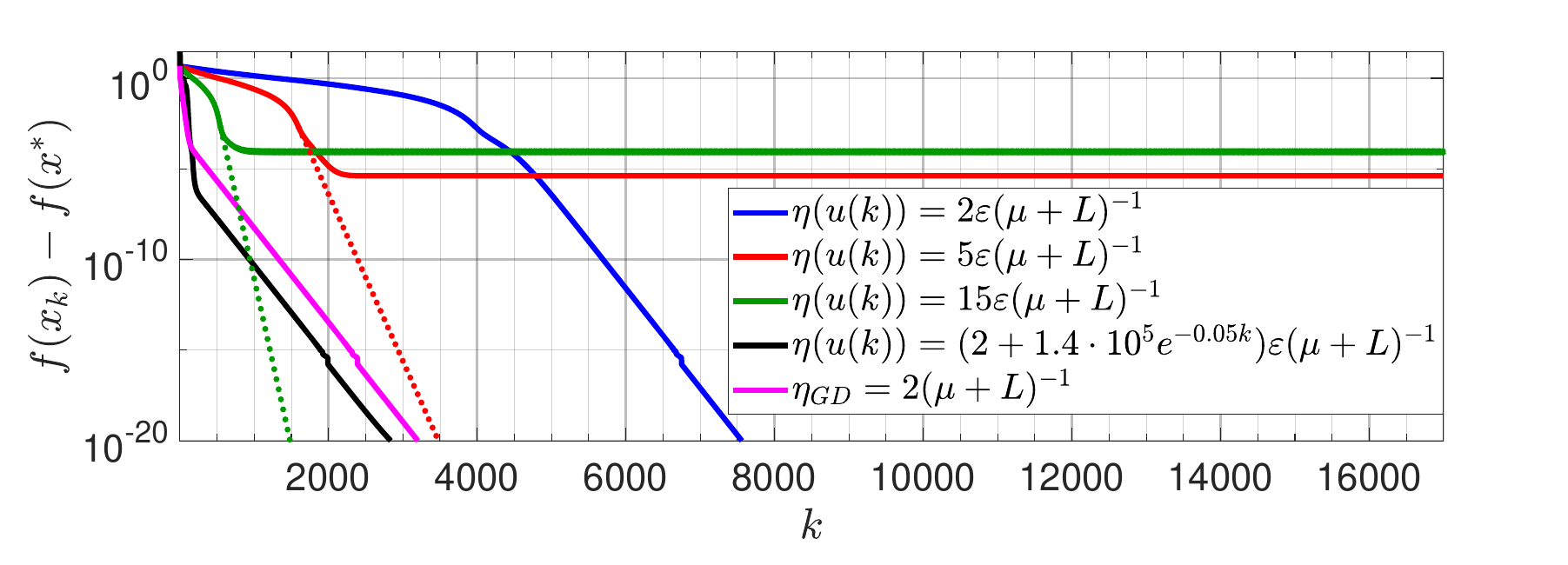}

    \vspace{-0.35cm}
    
    \caption{
    Simulation results for ~${f(x)\coloneqq \tfrac{1}{2}\tilde{x}^{\top}Q\tilde{x} +\sqrt{1 + \tilde{x}^{\top}Q\tilde{x}} - 1}$, where
    $x\in\R^{10}$,~${\tilde{x} \coloneqq x-x^{*}}$, $Q$ is a positive definite (random) matrix
    with minimal and maximal eigenvalue $0.003$ and $1$, and ~${\mu = 0.003}$ and ~${L = 2}$ \cite{schererebenbauer2025_tutorialIQC}.
    The algorithm parameters are ~${\beta = 0.1}$,~${\varepsilon = 10^{-3}}$ and $x_0,s_0$ are initialized by two times the all-ones and the zero vector, respectively. The graphs in blue, red, green, and black show RMSProp for different step size rules. The graph in magenta is the gradient descent algorithm (GD) with the optimal constant step size. 
    We observe that the magenta (GD) and the blue graph (RMSProp) have approximately the same convergence rate (in terms of the asymptotic slopes of the graphs). This is expected since the step size rule~${\eta(u(k))=2\varepsilon(\mu+L)^{-1}}$
    in \eqref{sys:x} for~${s_k = 0}$
    corresponds to the GD algorithm.
    We also observe that RMSProp suffers from a much slower transient phase since  $s_k>0$ causes initially smaller step sizes.
    However, one benefit of RMSProp is that we can increase the step size beyond the stability margin and obtain
    bounded iterates and higher convergence rates to a neighborhood of $x^*$ (shown by the red and green graphs).
    We see that a larger step size increases the error floor in the sense that $f(x_k)-f(x^*)$ becomes larger for
    large~$k$ $(k \geq 2500)$,
    but also the convergence rate (indicated by the dotted green and red slopes) towards the neighborhood defined by the error floor increases. Finally, \Cref{ISS-theorem} allows to design arbitrary time-varying step sizes and guarantees the boundedness of the iterates. For example, a time-varying step size (graph in black) provides comparatively better results in the transient phase and the same asymptotic convergence rate as for the blue graph. The corresponding step size was designed empirically, by choosing a comparatively large initial value that decays sufficiently fast over the iterations. Preliminary investigations suggest that similar step size rules can be designed for other objective functions within the same class, with the condition number of the objective function appearing to play an important role in the tuning process.}
    \label{fig:1}
\end{figure}

\begin{rmk}
The main result of this paper establishes global asymptotic stability and ISS for any~${\dst(\cdot) \in \Uu}$ of algorithm \eqref{sys:rmsprop}.
In the following we loosely discuss local exponential stability for quadratic objective functions based on the ISS-Lyapunov function. 

For the sake of simplicity, let~${x^{*} = 0}$. Further, let~${f(x)  \coloneqq x^{\top}Qx}$, with~${Q \succ 0}$. We consider the case of~${\dst(t) = 0}$ for all~${t \geq 0}$. We will now analyze the ISS-Lyapunov function \eqref{function:Lyapunov} and its difference by using \eqref{delta_V_ISS_ineq} under the assumption that~${\smax \leq \varepsilon}$ and~${\|x\|_{\infty} \leq \varepsilon}$. By using first order Taylor expansion of~${h}$, defined in \eqref{lyapunov:subfunctions}, around~${(x,s) = (0,0)}$, we get for \eqref{function:Lyapunov}
\begin{align*}
        \Vv(x,s) &= \gzero x^{\top}Qx+ \tfrac{2\gone}{3} \left(x^{\top}Qx\right)^{\frac{3}{2}} + \sum\limits_{i=1}^{d}\left(\tfrac{s_i}{\varepsilon} + \mathcal{O}\left(s_i^{\frac{3}{2}}\right)\right)\hspace{-0.07cm}.
    \end{align*}
In the following, we use inequalities resulting from the strong convexity of~$f$, as well as norm inequalities, to derive a lower bound of type~${\gmax \geq \hat{r}_1\|x\|_{\infty}}$, with~${\hat{r}_1 > 0}$. Recall~${\rho_1}$ and~${\rho_2}$ from Step 2.3 in the proof of \Cref{ISS-theorem}. Since~${\rho_1(\cdot,0) \geq \gzero}$ holds on~${[0,\infty)}$, and further, by using first order Taylor expansion of~${\rho_2}$ around~${(x,s) = (0,0)}$, it follows from~\eqref{delta_V_ISS_ineq} that
\begin{align*}
    \Delta \Vv & \leq -\hat{\tau}_1\smax -\hat{\tau}_2\|x\|_{\infty}^2 - \hat{\tau}_3\|x\|_{\infty}^3 + \mathcal{O}(\|x\|_{\infty}^4)
\end{align*}
for~${\|x\|_{\infty} \leq \varepsilon}$ and~${\|s\|_{\infty} \leq \varepsilon}$, and
where~${\hat{\tau}_1, \hat{\tau}_2}$ and~${\hat{\tau}_3}$ are some positive constants. 
By using further norm inequalities and suitable algebraic manipulations, we get
\begin{align*}
    \Delta \Vv &\leq -\hat{\tau}_4\Vv(x,s) + \mathcal{O}(\|x\|^4) + \mathcal{O}\left(\|s\|_1^{\frac{3}{2}}\right),
\end{align*}
where~${\hat{\tau}_4}$ is some suitable positive constant. Thus, it seems that one can establish local exponential stability with the established ISS Lyapunov function, corroborating the results in~\cite{dereich2025sharp_conv_rate_RMSProp}. However, a rigorous analysis is beyond the scope of this work.
\end{rmk}

\begin{rmk}
    In principle, there is a lot of flexibility to extend the proof to more general step size rules.
    For example, one could adapt algorithm \eqref{sys:rmsprop} to use different step size rules for individual components~${i \in \overline{1:d}}$, i.e.,~${\eta(\dst_i(\cdot))\coloneqq \etzero + \dst_i(\cdot)}$, where~${\dst_i(\cdot) \in \Uu}$. In that case, \Cref{ISS-theorem} would still hold and the proof would only require minor changes. The most noticeable change would be that~${\chi_{\Vv}}$ in~\eqref{DVV} would then depend on~${\|\dst\|_{\infty}}$.
    Furthermore, one could consider algorithms with a generalized step size ${\phi:\rrd \to (0, \infty)}$ of form
    \begin{align*}
            \snext_{i} \!=\! (1\!-\!\beta)s_{i} \!+\! \beta (\nabla_i f(x))^2, \hspace{0.15cm}
            \xnext_{i}\! =\! x_{i} \!-\! \tfrac{{\ett}}{\phi\left(\nabla_i f(x), s_i\right)}\nabla_i f(x)
    \end{align*}
    and study the stability of this class of algorithms under various conditions on~$\phi$, or incorporate state dependent step sizes rules for optimized transient phase behavior. 
\end{rmk}

\section{Discussion and Outlook}
In this paper we have established that the RMSProp algorithm is globally asymptotically stable for a suitable constant step size and input-to-state stable with respect to any time-varying bounded step size rule. This step size robustness property is beneficial for algorithm tuning since one can experiment with the step size
and balance convergence rate versus convergence accuracy.
Furthermore, the proposed Lyapunov function \eqref{function:Lyapunov} consists of two terms. 
The second term addresses the adaptive step size rule variable while
the first term is a $\gamma$-scaled version of a Lyapunov function for an algorithm without adaptive step size.
In our case, the first term is the $\gamma$-scaled objective function since RMSProp without adaptive step size corresponds to gradient descent.
We believe that this construction of a Lyapunov function can be applied to other algorithms with adaptive step size rules by 
replacing the first term by a Lyapunov function of the to-be-analyzed algorithm without adaptive step size. 
For future work, for example, one can utilize this construction to
analyze momentum-based optimization algorithms like Adam.

 \bibliographystyle{ieeetr}  
 \bibliography{sgd_ref.bib}

\end{document}